\newcounter{ENUM}
\newcommand{\beas}{\begin{eqnarray*}}
\newcommand{\eeas}{\end{eqnarray*}}
\newcommand{\itm}{\item}
\newenvironment{ilist}{\renewcommand{\theENUM}{\roman{ENUM}}\renewcommand{\itm}{\addtocounter{ENUM}{1}\item[(\theENUM)]}\begin{itemize}\setcounter{ENUM}{0}}{\end{itemize}}
\newenvironment{alist}[1][0]{\renewcommand{\theENUM}{\alph{ENUM}}\renewcommand{\itm}{\addtocounter{ENUM}{1}\item[\theENUM)]}\begin{itemize}\setcounter{ENUM}{#1}}{\end{itemize}}
\newtheorem{thm}{Theorem}[section]
\newtheorem{prop}[thm]{Proposition}
\newtheorem{lem}[thm]{Lemma}
\newtheorem{cor}[thm]{Corollary}
\theoremstyle{definition}
\newtheorem{defn}[thm]{Definition}
\newtheorem{ques}[thm]{Question}
\theoremstyle{remark}
\newtheorem{notn}[thm]{Notation}
\newtheorem{rem}[thm]{Remark}
\numberwithin{equation}{section}
\def\N{{\mathbb N}}
\def\Z{{\mathbb Z}}
\def\R{{\mathbb R}}
\def\S{{\mathfrak S}}
\def\ds{\displaystyle}
\newcommand{\bm}[1]{{\boldsymbol{#1}}}
\def\0{\bm{0}}
\def\k{\bm{k}}
\def\q{\bm{q}}
\def\r{\bm{r}}
\def\s{\bm{s}}
\def\v{\bm{v}}
\def\u{\bm{u}}
\def\w{\bm{w}}
\def\x{\bm{x}}
\def\y{\bm{y}}
\def\z{\bm{z}}
\def\ts{{\tilde{\bm s}}}
\def\tu{{\tilde{\bm u}}}
\def\Asc{\operatorname{Asc}}
\def\asc{\operatorname{asc}}
\def\Des{\operatorname{Des}}
\def\des{\operatorname{des}}
\def\rev{\operatorname{reverse}}
\def\KR{\operatorname{KR}}
\def\Par{\operatorname{Par}}
\def\REM{\operatorname{REM}}
\def\bREM{\overline{\operatorname{REM}}}
\def\L{\mathcal L}
\def\l{\ell}
\subjclass[2010]{52B20, 05A17}
\keywords{lecture hall polytopes, Ehrhart polynomials, Eulerian numbers}
\begin{document}
\title{The Lecture Hall Parallelepiped}
\author{Fu Liu}
\thanks{Fu Liu is partially supported by the Hellman Fellowship from
  UC Davis.} \address{Fu Liu, Department of Mathematics, University of
  California, Davis, One Shields Avenue, Davis, CA 95616 USA.}
\email{fuliu@math.ucdavis.edu}

\author{Richard P. Stanley}
\thanks{Richard Stanley is partially supported by NSF grant DMS-1068625.}
\address{Richard Stanley, Department of Mathematics, M.I.T.,
  Cambridge, MA 02139 USA.}
\email{rstan@math.mit.edu}

\begin{abstract}
The $\bm{s}$-lecture hall polytopes $P_\s$ are a class of integer
polytopes defined by Savage and Schuster which are closely related to
the lecture hall partitions of Eriksson and Bousquet-M\'elou. We
define a half-open parallelopiped Par$_\s$ associated with $P_\s$ and
give a simple description of its integer points. We use this
description to recover earlier results of Savage et al.\ on the $\delta$-vector (or $h^*$-vector) and to obtain the connections to $\s$-ascents and $\s$-descents, as well as some generalizations of these results. 

\end{abstract}

\maketitle

\section{Introduction}
Suppose that $P$ is an $n$-dimensional integral polytope, i.e., a
(convex) polytope whose vertices have integer coordinates. Let
$i(P,t)$ be the number of lattice points in the $t$th dilation $tP$ of
$P$. Then $i(P,t)$ is a polynomial in $t$ of degree $n$, called the
{\it Ehrhart polynomial} of $P$ \cite{ehrhart}. One way to study the
Ehrhart polynomial of an integral polytope is to consider its
generating function $\sum_{t \ge 0} i(P,t) z^t.$ It is known that the
generating function has the form 
\[ \sum_{t \ge 0} i(P,t) z^t = \frac{\delta_P(z)}{(1-z)^{n+1}},\]
where $\delta_P(z)$ is a polynomial of degree at most
$n$ with nonnegative integer coefficients \cite{Stan1980}. We denote
by $\delta_{P,i}$ the coefficient of $z^i$ in $\delta_P(z)$, for $0
\le i \le n.$ Thus $\delta_P(z) = \sum_{i=0}^n \delta_{P,i} z^i.$ 
For an $n$-dimensional polytope $P$ in $\R^n$, the {\it normalized
  volume} nvol$(P)$ is given by nvol$(P)=n!\cdot\mathrm{vol}(P)$,
where vol$(P)$ is the usual volume (Lebesgue measure).
Another well-known result is that $\delta_P(1)=\sum_{i=0}^n
\delta_{P,i}$ is the 
normalized volume of $P.$ We call $(\delta_{P,0}, \delta_{P,1},
\dots, \delta_{P,n})$ the {\it $\delta$-vector} or {\it $h^*$-vector}
of $P$. In this paper, we will investigate the $\delta$-vectors of
$\s$-lecture hall polytopes, which were introduced by Savage and
Schuster \cite{SavSch2012}. A basic idea we use is a result by the second author \cite[Lemma~4.5.7]{stanleyec2}: one can determine the $\delta$-vector of an integral simplex by counting the number of lattice points inside an associated parallelepiped. 

Let $\s = (s_1, \dots, s_n)$ be a sequence of positive integers. An
{\it $\s$-lecture hall partition} is an integer sequence $\lambda =
(\lambda_1, \lambda_2, \dots, \lambda_n)$ satisfying 
\[ 0 \le \frac{\lambda_1}{s_1} \le \frac{\lambda_2}{s_2} \le \cdots
\le \frac{\lambda_n}{s_n}.\] 
When $\s = (1, 2, \dots, n),$ this gives the original lecture hall
partitions introduced by Bousquet-M\'elou and Eriksson 
\cite{BousErik1997}. Savage and Schuster \cite{SavSch2012} define the
{\it $s$-lecture hall polytope} to be the polytope, denoted $P_\s$,
in $\R^n$ defined by the inequalities
\[ 0 \le \frac{x_1}{s_1} \le \frac{x_2}{s_2} \le \cdots \le
\frac{x_n}{s_n} \le 1.\] 
They use the $\s$-lecture hall polytopes to establish a connection
between $\s$-lecture hall partitions and their geometric setup. 
A further result, appearing in \cite{CorLeeSav2005}, is 
that the Ehrhart polynomial of the lecture hall polytope
associated to $\s = (1, 2, \dots, n)$ and the anti-lecture hall polytope associated to $s=(n,n-1,\dots,1)$ is the same as that of the
$n$-dimensional unit cube. 
It is well known that components in the
$\delta$-vector of the $n$-dimensional unit cube are Eulerian numbers,
which count the number of permutations in $\S_d$ with a certain number
of descents \cite[Prop.~1.4.4]{stanleyec1}. Thus, the same is true for 
$P_{(1,2\dots,n)}$ and $P_{(n,n-1,\dots,1)}.$  

It is easy to see that $P_\s$ has the vertex set
 \[ \{(0,0,0,\dots, 0), (0,0, \dots, 0, s_n), (0,0,\dots,0, s_{n-1},
  s_n), \dots, (s_1, s_2, \dots, s_n) \}.\] 
Hence $P_\s$ is a simplex with normalized volume $\prod_{i=1}^n s_i.$
In particular, when $\s \in \S_n,$ the normalized volume is $n!$,
which is exactly the cardinality of $\S_n.$ Thus, the sum of the
components in the $\delta$-vector of $P_\s$ is $\prod_{i=1}^n s_i$ or
$n!.$ On the other hand, since $P_\s$ is a simplex, its
$\delta$-vector corresponds to gradings of the lattice points in a
fundamental parallelepiped associated to it. (See Lemma \ref{lem:cnnt}
for details.)

The original motivation of this paper was to give a bijection between
$\S_n$ and lattice points in the fundamental parallelepipeds associated
to $P_{(1,2,\dots,n)}$ and $P_{(n,n-1,\dots,1)}$ so that we can recover the result of Corteel-Lee-Savage \cite{CorLeeSav2005} on the Ehrhart polynomials of these polytopes. 
In fact we can extend
our original aim to the fundamental parallelopiped associated to
$P_\s$ for any sequence $\s$ of positive integers. Our results are
stated in terms of ascents and descents of certain sequences
associated to $\s$ which generalize the notion of the inversion
sequence of a permutation.
We also consider the connection between descents and ascents of sequences associated to $\s$ and the reverse of $\s.$

The paper is organized as follows. In Section \ref{sec:background}, we review basic results of $\delta$-vectors that are relevant to our paper, introduce the $\s$-lecture hall parallelepiped $\Par_\s$, and establish in Lemma \ref{lem:cnnt} the connection between the number lattice points in $\Par_\s$ and the $\delta$-vector of $P_\s.$ In Section \ref{sec:bijs}, we give a bijection $\REM_\s$ from the lattice points in $\Par_\s$ to some simple set (which we call $\Psi_n$). By figuring out the inverse of $\REM_\s,$ we are able to describe in Theorem \ref{thm:delta-des} the $\delta$-vector of $P_{\s^*},$ a polytope closely related to $P_\s,$ using the language of descents. A special situation of this theorem agrees with results by Savage-Schuster \cite{SavSch2012} on the $\delta$-vector of $P_\s$. In Section \ref{sec:anti}, we apply results from Section \ref{sec:bijs} to the case when $\s=(n,n-1,\dots,1)$ and recover the result of Corteel-Lee-Savage on the Ehrhart polynomial of the anti-lecture hall polytope. In Sections \ref{sec:reversal}, we consider $\s$ and its reversal $\u,$ and their corresponding polytopes $P_\s$ and $P_\u,$ and provide a bijection from the lattice points $\Par_{\s^*}$ to the lattice points in $\Par_{\u^*}$ through maps defined in Section \ref{sec:bijs}. In Section \ref{sec:s1=1}, we show that the $\delta$-vector of $P_\s$ can be described using ascents of elements in $\Phi_n,$ using which we give the desired bijective proof for Corteel-Lee-Savage's result on the Ehrhart polynomial of $P_{(1,2,\dots,n)}.$

\section{Background}\label{sec:background}
For any nonnegative integer $N,$ we denote by $\langle N\rangle$ the
set $\{0, 1, \dots, N\}.$
\subsection{Descents, the $\delta$-vector of a unit cube, etc.} 

Write $\N=\{0,1,2.\dots\}$. Let $\r=(r_1,\dots,r_n) \in \N^n.$ We say
that $i$ is a {\it (regular) descent} of $\r$ if $r_i > 
r_{i+1}.$ Define the {\it descent set} $\Des(\r)$ of $\r$ by 
 \[ \Des(\r) = \{i \ | \ r_i > r_{i+1}\},\] 
and define its size $\des(\r)= \# \Des(\r).$

The {\it Eulerian number} $A(n,i)$ is the number of permutations $\pi
\in \S_n$ with exactly $i-1$ descents. Let $\square_n$ denote the
$n$-dimensional unit cube. Then the $\delta$-vector of $\square_n$ is
given by  
 \[ \delta_{\square_n, i}  = A(n,i+1) = \# \{ \pi \in \S_n \ | \
  \des(\pi) = i \}.\] 
By \cite[Corollary 1]{SavSch2012} we have that for $\s=(1,2,\dots,n),$ 
\[ \delta_{P_\s, i}  = A(n,i+1) = \# \{ \pi \in \S_n \ | \ \des(\pi) = i \}.\]

There are many other statistics related to permutations also counted
by Eulerian numbers. Given a permutation $\pi =(\pi_1,\dots,\pi_n) \in
\S_n,$ a pair $(\pi_j,\pi_k)$ is an {\it inversion} of $\pi,$ if $j <
k$ and $\pi_j > \pi_k.$ Define $I(\pi) = (a_1, \dots, a_n)$, where
$a_i$ is the number of inversions $(\pi_j,\pi_k)$ of $\pi$ that ends
with $i=\pi_k$.  The sequence $I(\pi)$ is known as the {\it inversion
  sequence} or {\it inversion table} of the permutation $\pi.$
Clearly, $I(\pi) \in \langle n-1\rangle \times \cdots \times \langle
1\rangle \times \langle 0\rangle.$ In fact, $I: \S_n \to \langle
n-1\rangle \times \cdots \times \langle 1\rangle \times \langle
0\rangle$ is a bijection \cite[Prop.~1.3.12]{stanleyec1}.  In this
paper, it is more convenient to use inversion sequences to represent
permutations. We give statistics of inversion sequences that are
counted by Eulerian numbers.

\begin{lem}\label{lem:invseq}
The number of inversion sequences of length $n$ with $i$ descents is
the Eulerian number $A(n, i+1).$ 
\end{lem}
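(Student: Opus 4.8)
The plan is to reduce the statement to the very definition of the Eulerian number: I will produce a \emph{descent-preserving} bijection between $\S_n$ and the set of inversion sequences $\langle n-1\rangle\times\cdots\times\langle 1\rangle\times\langle 0\rangle$, so that for each $i$ it restricts to a bijection between the permutations with $i$ descents and the inversion sequences with $i$ descents; since there are $A(n,i+1)$ of the former, the lemma follows. A word of caution: the inversion table $I$ from the discussion above is \emph{not} such a bijection --- for instance $I(3142)=(1,2,0,0)$ has a single descent while $3142$ has two. The right map is instead the \emph{Lehmer code} $L\colon\S_n\to\langle n-1\rangle\times\cdots\times\langle 0\rangle$, $L(\pi)=(c_1,\dots,c_n)$ with $c_i=\#\{\,j:i<j\le n,\ \pi_j<\pi_i\,\}$. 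Clearly $0\le c_i\le n-i$, and $L$ is a bijection: from the $c_i$ one reads off $\pi$ from left to right, $\pi_i$ being the $(c_i+1)$st smallest value not yet used. (Equivalently $L(\pi)=I(\pi^{-1})$, so bijectivity can also be quoted from \cite[Prop.~1.3.12]{stanleyec1}.)

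The substance of the proof is the identity $\Des(L(\pi))=\Des(\pi)$ for all $\pi\in\S_n$. To get it, fix $i$ with $1\le i\le n-1$ and compare $c_i$ and $c_{i+1}$. We have $c_{i+1}=\#\{\,j\ge i+2:\pi_j<\pi_{i+1}\,\}$ and $c_i=\varepsilon+\#\{\,j\ge i+2:\pi_j<\pi_i\,\}$, where $\varepsilon=1$ if $\pi_{i+1}<\pi_i$ and $\varepsilon=0$ otherwise. If $i\in\Des(\pi)$, i.e.\ $\pi_i>\pi_{i+1}$, then $\varepsilon=1$ and $\{\,j\ge i+2:\pi_j<\pi_{i+1}\,\}\subseteq\{\,j\ge i+2:\pi_j<\pi_i\,\}$, so $c_i\ge c_{i+1}+1$ and hence $i\in\Des(L(\pi))$. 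If $i\notin\Des(\pi)$, i.e.\ $\pi_i<\pi_{i+1}$, then $\varepsilon=0$ and the inclusion of index sets is reversed, so $c_i\le c_{i+1}$ and $i\notin\Des(L(\pi))$. Thus the two descent sets coincide, and the bijection does what we want.

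The one place to be careful is that final comparison: one must keep straight which index set is contained in which, and use that a descent of an inversion sequence means the \emph{strict} inequality $c_i>c_{i+1}$, so that the ascent case really does contribute nothing. Everything else --- that $L$ lands in the prescribed product of intervals and is a bijection --- is routine and can be cited. A recursion-based alternative is also possible, verifying that $b(n,i):=\#\{\text{inversion sequences of length }n\text{ with }i\text{ descents}\}$ satisfies $b(n,i)=(i+1)\,b(n-1,i)+(n-i)\,b(n-1,i-1)$ by splitting off the first coordinate $a_1\in\langle n-1\rangle$; but this needs the auxiliary fact that the first coordinate of a length-$(n-1)$ inversion sequence with $k$ descents averages to $k$, so the bijection above seems the cleaner route.
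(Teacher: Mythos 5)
Your proof is correct and is essentially the paper's argument in different clothing: since $L(\pi)=I(\pi^{-1})$, your identity $\Des(L(\pi))=\Des(\pi)$ is exactly the paper's observation that $i$ is a descent of $I(\pi)$ if and only if $i$ is a descent of $\pi^{-1}$, combined with the bijection $\pi\mapsto\pi^{-1}$. The only difference is presentational (you verify the descent-preservation by the set-inclusion comparison of $c_i$ and $c_{i+1}$, while the paper phrases it as ``$i+1$ precedes $i$ in $\pi$''), so both routes coincide in substance.
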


\begin{proof}
Suppose that $\r=(r_1,\dots,r_n)$ is the inversion sequence of $\pi
\in \S_n.$ Then 
\begin{eqnarray*}
	\text{$i$ is a descent of $\r$, i.e., $r_i > r_{i+1}$}
        &\Longleftrightarrow& \text{$i+1$ precedes $i$ in $\pi$} \\ 
	&\Longleftrightarrow& \text{$i$ is a descent of $\pi^{-1}$.}
\end{eqnarray*}
The lemma follows from the fact that $\pi \mapsto \pi^{-1}$ is a
bijection on $\S_n.$ 
\end{proof}

\subsection{$\delta$-vector of simplices}
 When $P$ is a simplex, it is easy to describe its $\delta$-vector. We
 first give some related definitions and notation. 

For a set of independent vectors $W=\{\w_1, \dots, \w_n\},$ we
denote by $\Par(W) = \Par(\w_1,\dots,\w_n)$ the {\it fundamental
(half-open) parallelepiped generated by $W$}: 
\[\Par(\w_1,\dots, \w_n) := \left\{  \sum_{i=1}^n c_i \w_i \ | \  0
  \le c_i < 1 \right\}.\] 

For any set $S \subset \Z^N,$ we denote by $\L^i(S)$ the set of
lattice points in $S$ whose last coordinates are $i:$ 
\[ \L^i(S) :=  \left\{ \x \in S \cap \Z^{N} \ | \ \text{last
    coordinate of $\x$ is $i$} \right\}, \] 
and let $ \l^i(S) := \# \L^i(S)$ be the cardinality of $\L^i(S).$ 

For convenience, for any vector $\v \in \R^N$, we let $\v^* := (\v,1)$
be the vector obtained by appending $1$ to the end of $\v.$ 

Suppose $P$ is an $n$-dimensional simplex with vertices $\v_0, \v_1,
\dots, \v_n$. Then the $\delta$-vector of $P$ is determined by the
grading of the fundamental parallelepiped $\Par(\v_0^*, \dots,
\v_n^*)$. More precisely \cite[Lemma~4.5.7]{stanleyec2}, 
\begin{equation}\label{equ:detdelta}
\delta_{P,i} = \l^i(\Par(\v_0^*,\dots, \v_n^*)), \quad 0
\le i \le n. 
\end{equation}
(Note that $\l^i(\Par(\v_0^*,\dots, \v_n^*)) = 0$ for all $i > n.$)

\subsection{$\s$-lecture hall parallelepiped}
\begin{defn}
Given a sequence $\s = (s_1, \dots, s_n)$ of positive integers, the
{\it $\s$-lecture hall parallelepiped}, denoted by $\Par_\s,$ is the
fundamental parallelepiped generated by the non-origin vertices of the
$\s$-lecture hall polytope $P_\s:$ 
	\[\Par_\s := \Par((0,0, \dots, 0, s_n), (0,0,\dots,0, s_{n-1},
        s_n), \dots, (s_1, s_2, \dots, s_n)).\] 
\end{defn}

\begin{lem}\label{lem:cnnt}
Suppose that $\s = (s_1, \dots, s_n)$ is a sequence of positive
integers. Then the $\delta$-vector of $P_\s$ is given by
\begin{equation}\label{equ:delta}
  \delta_{P_\s,i} = \l^i(\Par_{\s^*}), \quad 0 \le i \le n. 
\end{equation}
Furthermore, if $s_n = 1,$ then the two fundamental parallelepipeds
$\Par_{\s}$ and $\Par_{\s^*}$ have the same grading: 
\begin{equation}\label{equ:same}
\l^i(\Par_{\s}) = \l^i(\Par_{\s^*}), \quad 0 \le i \le n.
\end{equation}
Hence, 
\begin{equation}\label{equ:delta1}
\delta_{P_\s,i} = \begin{cases}\l^i(\Par_{\s}), & 0 \le i \le n-1;
  \\ 0, & i=n.\end{cases} 
\end{equation}
\end{lem}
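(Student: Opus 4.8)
The plan is to deduce all three equations from \eqref{equ:detdelta} together with a single integrality observation. For \eqref{equ:delta}, set $\v_0 = \0$ and, for $1 \le j \le n$, let $\v_j = (0,\dots,0,s_{n-j+1},\dots,s_{n-1},s_n)$, so that $\v_0,\v_1,\dots,\v_n$ are precisely the vertices of the simplex $P_\s$. Then \eqref{equ:detdelta} gives $\delta_{P_\s,i} = \l^i(\Par(\v_0^*,\v_1^*,\dots,\v_n^*))$ for $0 \le i \le n$, so it only remains to identify this parallelepiped with $\Par_{\s^*}$. Taking $\s^* = (s_1,\dots,s_n,1)$, the vertices of $P_{\s^*} \subset \R^{n+1}$ are the origin together with $(0,\dots,0,1) = \v_0^*$ and $(\v_j,1) = \v_j^*$ for $1 \le j \le n$; hence $\Par_{\s^*}$, being generated by the non-origin vertices of $P_{\s^*}$, equals $\Par(\v_0^*,\v_1^*,\dots,\v_n^*)$, and \eqref{equ:delta} follows. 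This step is pure bookkeeping.

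Now assume $s_n = 1$; the heart of the lemma is \eqref{equ:same}. A general point of $\Par_{\s^*}$ has the form
\[ c_0\v_0^* + \sum_{j=1}^n c_j\v_j^* \;=\; \left(\sum_{j=1}^n c_j\v_j,\; c_0 + \sum_{j=1}^n c_j\right), \qquad 0 \le c_0,c_1,\dots,c_n < 1, \]
and, since each $\v_j$ with $1 \le j \le n$ has $n$-th coordinate $s_n = 1$, the $n$-th coordinate of this point equals $\sum_{j=1}^n c_j$ while its $(n+1)$-st (``last'') coordinate equals $c_0 + \sum_{j=1}^n c_j$. The key observation is: if such a point lies in $\Z^{n+1}$, then $\sum_{j=1}^n c_j$ is an integer (it is the $n$-th coordinate), whence $c_0 = (\text{last coordinate}) - \sum_{j=1}^n c_j$ is an integer in $[0,1)$ and therefore $c_0 = 0$; in particular the last coordinate of the point equals its $n$-th coordinate. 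Granting this, define $\pi(\y) = (y_1,\dots,y_n)$ and $\iota(\x) = (\x, x_n)$. One checks that $\pi$ maps $\L^i(\Par_{\s^*})$ into $\L^i(\Par_\s)$ (using the key observation to see $y_n = y_{n+1} = i$), that $\iota$ maps $\L^i(\Par_\s)$ into $\L^i(\Par_{\s^*})$ (because $(\x,x_n) = \sum_{j=1}^n c_j\v_j^* \in \Par_{\s^*}$ whenever $\x = \sum_{j=1}^n c_j\v_j$, as $x_n = s_n\sum_j c_j = \sum_j c_j$), and that $\pi$ and $\iota$ are mutually inverse, the only nontrivial point being that $\iota\circ\pi$ is the identity — which holds precisely because a lattice point of $\Par_{\s^*}$ has equal $n$-th and $(n+1)$-st coordinates. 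This proves \eqref{equ:same}.

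Finally, \eqref{equ:delta1} follows by combining \eqref{equ:delta} and \eqref{equ:same}, which give $\delta_{P_\s,i} = \l^i(\Par_\s)$ for all $0 \le i \le n$; and $\l^n(\Par_\s) = 0$ because every point of $\Par_\s$ has $n$-th coordinate $s_n\sum_{j=1}^n c_j < n s_n = n$, so $\Par_\s$ contains no lattice point with $n$-th coordinate $n$. I do not expect any genuine obstacle here: all the content is the ``$c_0 = 0$'' observation used for \eqref{equ:same}, and the only thing requiring care is keeping straight the two competing notions of ``last coordinate'' (the $n$-th in $\R^n$ and the $(n+1)$-st in $\R^{n+1}$), since it is precisely their coincidence on the lattice points of $\Par_{\s^*}$ that makes coordinate-deletion preserve the grading.
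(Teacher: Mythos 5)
Your proof is correct and follows essentially the same route as the paper: identify $\Par_{\s^*}$ with the parallelepiped $\Par(\v_0^*,\dots,\v_n^*)$ to get \eqref{equ:delta} from \eqref{equ:detdelta}, observe that when $s_n=1$ any lattice point of $\Par_{\s^*}$ forces the coefficient of $(0,\dots,0,1)$ to vanish so that its last two coordinates agree, and conclude with the bound $x_n < n s_n = n$ for \eqref{equ:delta1}. The only (harmless) difference is that you make the coordinate-dropping bijection explicit by exhibiting its inverse $\iota(\x)=(\x,x_n)$, which the paper leaves implicit.
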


\begin{proof}
Formula \eqref{equ:delta} follows from \eqref{equ:detdelta}
and the observation that  
  \[ \Par_{\s^*} = \Par( \v^* \ | \ \v \text{ is a vertex of $P$}).\]

Suppose that $s_n=1.$ We claim that for any $\x \in \Par_{\s^*}\cap
\Z^{n+1}$, the last two coordinates of $\x$ are the same. Let
$\x=(x_1,\dots, x_{n+1}) \in \Par_{\s^*} \cap\Z^{n+1}.$ There exist
(unique) $c_1,\dots, c_{n}, c_{n+1} \in [0,1)$ such that 
\begin{eqnarray*}
	\x &=& c_{n+1} (0,\dots, 0, 0 ,0, 1) + c_n (0, \dots, 0, 1,
        1) + c_{n-1} (0, \dots, s_{n-1}, 1, 1) \\
	& & + \cdots + c_1 (s_1,\dots, s_{n-1}, 1, 1).
\end{eqnarray*}
Then 
  \[ x_n = \sum_{i=1}^n c_i, \text{ and } x_{n+1} =
     \sum_{i=1}^{n+1} c_i.\] 
Since both $x_n$ and $x_{n+1}$ are integers, the number $c_{n+1} =
x_{n+1}-x_n$ is an integer. However, $0 \le c_{n+1} < 1.$ We must have
that $c_{n+1}=0.$ Therefore $x_n=x_{n+1}$, so the claim holds.  
	
By our claim, one sees that the map that drops the last coordinate of
a point in $\R^{n+1}$ induces a bijection between $\L^i(\Par_{\s^*})$
and $\L^i(\Par_\s)$ for every $i.$ Hence equation~\eqref{equ:same}
follows.  
	
Finally, the last coordinate of any point in $\Par_{\s}$ is strictly
smaller than $n s_n = n.$ Hence, $\L^n(\Par_{\s}) = \emptyset$ and
$\l^n(\Par_\s) =0.$ Then formula \eqref{equ:delta1} follows from
\eqref{equ:delta} and \eqref{equ:same}. 
\end{proof}

\section{Bijections}\label{sec:bijs}

Throughout this section, we assume that $\s=(s_1,\dots,s_n)$ is a
sequence of positive integers.  For brevity, for the rest of the paper, whenever $\s$ is a fixed sequence, we associate the following set to $\s$:
\begin{equation*}
	\Psi_n  =  \langle s_1-1\rangle \times \cdots \times \langle s_n-1 \rangle. 
  \end{equation*}
This set coincides with the set $I_n$ of $\s$-inversion sequences 
introduced in \cite{SavSch2012} and further investigated in \cite{SavVis2012,PenSav2012a, PenSav2012b}.
  \begin{defn}\label{defn:REM} 
We define a map
\[ \REM_\s: \Par_\s \cap \Z^n \to \Psi_n  \] 
in the following way.
Let $\x = (x_1, \dots, x_n) \in \Par_\s \cap \Z^n.$ For each $x_i,$
let $k_i = \lfloor \frac{x_i}{s_i} \rfloor$ be the quotient of
dividing $x_i$ by $s_i,$ and $r_i$ be the remainder. Hence
 \[ x_i = k_i s_i +
    r_i,\] 
where $k_i \in \langle n-1\rangle $ and $r_i \in \langle s_i-1\rangle
.$ Let $\k = (k_1,\dots, k_n)$ and $\r = (r_1, \dots, r_n).$ Then we
define $\REM_\s(\x) = \r$.
\end{defn}

\begin{lem}\label{lem:REMbij}
$\REM_\s$ is a bijection from $\Par_\s \cap \Z^n$ to $\Psi_n$. 
\end{lem}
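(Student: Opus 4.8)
The plan is to show that $\REM_\s$ is both well-defined and bijective by exhibiting its inverse explicitly, or equivalently by a counting argument combined with injectivity. First I would observe that for $\x = \sum_{i=1}^n c_i \v_i$ with $\v_i = (0,\dots,0,s_i,s_{i+1},\dots,s_n)$ and $c_i \in [0,1)$, the coordinates of $\x$ satisfy $x_j = \sum_{i=1}^j c_i \cdot s_j = s_j \sum_{i=1}^j c_i$; in particular $\frac{x_j}{s_j} = c_1 + c_2 + \cdots + c_j$. Writing $C_j = \sum_{i=1}^j c_i$, we have $0 \le C_j < j$ and $C_j - C_{j-1} = c_j \in [0,1)$, so $C_j$ and $C_{j-1}$ differ by less than $1$. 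Since $\x \in \Z^n$, each $\frac{x_j}{s_j} = C_j$ need not be an integer, but the key point is that $k_j = \lfloor x_j/s_j \rfloor = \lfloor C_j \rfloor$ and the fractional part of $C_j$ equals the fractional part of $C_{j-1}$ (because they differ by $c_j \in [0,1)$ — wait, that only forces $\lfloor C_j\rfloor - \lfloor C_{j-1}\rfloor \in \{0,1\}$, not equality of fractional parts). Let me instead track the data directly: $r_j = x_j - s_j k_j = s_j(C_j - k_j) = s_j \cdot \{C_j\}$ where $\{\cdot\}$ is the fractional part, and this lies in $\langle s_j - 1\rangle$ automatically since $r_j$ is a nonnegative integer strictly less than $s_j$. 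So $\REM_\s$ is well-defined into $\Psi_n$.

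For bijectivity, the cleanest route is to construct the inverse. Given $\r = (r_1,\dots,r_n) \in \Psi_n$, I would recover the $c_i$'s and hence $\x$ as follows: set $f_j := r_j / s_j \in [0,1)$, the intended fractional part of $C_j$. The sequence of integer parts $k_j = \lfloor C_j \rfloor$ is then forced by the constraint $c_j = C_j - C_{j-1} \in [0,1)$, i.e. $k_j + f_j - (k_{j-1} + f_{j-1}) \in [0,1)$, together with $k_1 \in \{0\}$ forced by $0 \le c_1 = C_1 < 1$ so $k_1 = 0$ (using $f_1 = r_1/s_1$; here $C_1 = c_1 = f_1$). Inductively, given $k_{j-1}$, the condition $k_j + f_j - k_{j-1} - f_{j-1} \in [0,1)$ determines $k_j$ uniquely: $k_j = k_{j-1}$ if $f_j \ge f_{j-1}$, and $k_j = k_{j-1} + 1$ if $f_j < f_{j-1}$. (One checks $k_j \in \langle n-1\rangle$ since it increases by at most $1$ at each of the $n-1$ steps starting from $0$.) This recovers $c_j$ and hence $\x = \sum c_i \v_i$, which one then verifies lies in $\Par_\s \cap \Z^n$ and maps back to $\r$ under $\REM_\s$; conversely any $\x \in \Par_\s \cap \Z^n$ must have come from these forced $k_j$, giving injectivity.

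Alternatively, and perhaps more in keeping with the paper's style, one can simply note that both sets are finite and argue by injectivity plus equality of cardinalities: $\#\Psi_n = \prod_{i=1}^n s_i = \mathrm{nvol}(P_\s) = \#(\Par_\s \cap \Z^n)$, where the last equality is the standard fact that a half-open fundamental parallelepiped of a lattice sublattice contains exactly $|\det|$ lattice points (here the generators $\v_1,\dots,\v_n$ have $|\det| = \prod s_i$). Then injectivity of $\REM_\s$ suffices. Injectivity follows from the uniqueness of the $c_i$ in the parallelepiped representation together with the observation that $\r$ determines all the fractional parts $\{C_j\}$, and the argument above shows the integer parts $k_j$ are then forced; so $\r$ determines $(c_1,\dots,c_n)$ and hence $\x$.

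The main obstacle I anticipate is the bookkeeping in the inverse construction — specifically, verifying that the recursively-defined $k_j$ stay in the range $\langle n-1\rangle$ and that the resulting point genuinely has integer coordinates and lies in the half-open parallelepiped (the half-openness, $c_i < 1$ strictly, is what makes the representation unique and must be respected at every step). It is worth being careful that the recursion $k_j = k_{j-1} + [f_j < f_{j-1}]$ is the \emph{only} choice compatible with $c_j \in [0,1)$; this is the crux and deserves an explicit one-line justification rather than being waved through. Everything else — well-definedness, the cardinality count — is routine.
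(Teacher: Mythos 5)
Your proposal is correct and follows essentially the same route as the paper: you recover the quotient vector from the remainder vector by the forced recursion $k_1=0$ and $k_j=k_{j-1}+1$ exactly when $r_{j-1}/s_{j-1}>r_j/s_j$, which is precisely the paper's inverse $k_j=\des_\s^{<j}(\r)$ obtained from its characterization of $\Par_\s$ via $0\le x_{i+1}/s_{i+1}-x_i/s_i<1$. The cardinality-count alternative you mention is fine but unnecessary, since the explicit inverse already gives bijectivity.
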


In order to prove Lemma \ref{lem:REMbij}, we write $\REM_\s$ as a
composition of two maps. Let    
\[ f_\s: \Par_\s \cap \Z^n \to \langle n-1\rangle ^n \times \Psi_n, \quad
\x \mapsto (\k,\r), \]  
where $\k$ and $\r$ are defined as in Definition~\ref{defn:REM}.
We denote by $\KR_\s$ the image set of $\Par_\s \cap \Z^n$ under the
map $f_\s.$ 
It is clear that the map $f_\s$ is a bijection between $\Par_\s \cap \Z^n$ and $\KR_\s.$  

Let \[ g_\s: \KR_\s \to \Psi_n, \quad (\k,\r) \mapsto \r.\] 
Clearly, $\REM_\s$ is the composition of $f_\s$ and $g_\s,$ and Lemma
\ref{lem:REMbij} follows from the following lemma. 

\begin{lem}\label{lem:gbij}
The map $g_\s$ gives a bijection between $\KR_\s$ and $\Psi_n$. 
\end{lem}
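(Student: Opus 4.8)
\textbf{Proof proposal for Lemma \ref{lem:gbij}.}

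The plan is to show that $g_\s$ is both injective and surjective by understanding exactly which pairs $(\k, \r)$ lie in $\KR_\s$. The key observation is that a lattice point $\x = (x_1, \dots, x_n) \in \Par_\s \cap \Z^n$ can be written uniquely as $\x = \sum_{j=1}^n c_j \v_j$ where $\v_j = (0,\dots,0,s_j,s_{j+1},\dots,s_n)$ is the $j$th non-origin vertex of $P_\s$ and $c_j \in [0,1)$. Working out the coordinates gives $x_i = \sum_{j \le i} c_j \, s_i = s_i \sum_{j=1}^i c_j$, so that $x_i / s_i = \sum_{j=1}^i c_j$. Hence $k_i = \lfloor x_i/s_i \rfloor = \lfloor \sum_{j=1}^i c_j \rfloor$ and $r_i = x_i - k_i s_i = s_i \left( \sum_{j=1}^i c_j - k_i \right)$. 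Writing $d_i := \sum_{j=1}^i c_j - k_i = \{ \sum_{j=1}^i c_j \}$ for the fractional part, we have $d_i \in [0,1)$ and $r_i = s_i d_i$, and moreover $c_i = (\sum_{j \le i} c_j) - (\sum_{j \le i-1} c_j) = (k_i + d_i) - (k_{i-1} + d_{i-1})$ for $i \ge 2$, with $c_1 = k_1 + d_1$. The condition $c_i \in [0,1)$ then translates, given that $d_i$ is determined by $r_i$ and $s_i$, into a condition relating $k_i$ to $k_{i-1}$ and the $d$'s.

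First I would establish surjectivity: given any $\r \in \Psi_n$, I set $d_i := r_i / s_i \in [0,1)$, and then attempt to choose the $k_i \in \langle n-1\rangle$ so that the resulting point lies in $\Par_\s$. The natural choice is $k_1 := 0$ (forcing $c_1 = d_1 \in [0,1)$), and then recursively $k_i := k_{i-1} + [\, d_i < d_{i-1} \,]$, i.e., increment $k_{i-1}$ by one precisely when $d_i < d_{i-1}$. This makes $c_i = (k_i - k_{i-1}) + (d_i - d_{i-1})$ equal to $d_i - d_{i-1}$ if $d_i \ge d_{i-1}$ (which lies in $[0,1)$) and equal to $1 + d_i - d_{i-1}$ if $d_i < d_{i-1}$ (which also lies in $[0,1)$). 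One must check $k_i \le n-1$: since $k_i - k_{i-1} \in \{0,1\}$ and $k_1 = 0$, we get $k_i \le i - 1 \le n-1$, so this is automatic. Thus every $\r$ has at least one preimage, giving surjectivity.

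Next I would prove injectivity, equivalently that the $k_i$ constructed above are the \emph{only} valid choice. Working by induction on $i$: the constraint $c_1 = k_1 + d_1 \in [0,1)$ with $k_1 \in \langle n-1 \rangle$ and $d_1 \in [0,1)$ forces $k_1 = 0$. Inductively, suppose $k_{i-1}$ is determined; the constraint $c_i = (k_i - k_{i-1}) + (d_i - d_{i-1}) \in [0,1)$ with $d_i - d_{i-1} \in (-1,1)$ forces $k_i - k_{i-1} \in \{0,1\}$, and then the exact value is pinned down by the sign of $d_i - d_{i-1}$: if $d_i \ge d_{i-1}$ then $k_i - k_{i-1} = 1$ would give $c_i \ge 1$, so $k_i = k_{i-1}$; if $d_i < d_{i-1}$ then $k_i - k_{i-1} = 0$ would give $c_i < 0$, so $k_i = k_{i-1} + 1$. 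Hence $\k$ is uniquely determined by $\r$, so $g_\s$ is injective. Combining the two halves gives the bijection.

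I expect the main obstacle to be bookkeeping rather than any real difficulty: one must be careful that the $d_i$ (equivalently the $r_i$) genuinely range over all of $[0,1)$ as needed, that the boundary case $d_i = d_{i-1}$ is handled consistently with the half-open convention $c_i \in [0,1)$, and that the bound $k_i \le n-1$ is never violated. A clean way to package the argument is to exhibit explicitly the two-sided inverse $\Psi_n \to \KR_\s$ sending $\r$ to $(\k, \r)$ with $k_i = \#\{\, 2 \le j \le i : r_j/s_j < r_{j-1}/s_{j-1} \,\}$, and verify directly that $f_\s^{-1}$ applied to this pair returns a point of $\Par_\s$; this simultaneously yields both injectivity and surjectivity and will also be useful for the later sections, since it makes the descent statistic of $\r$ (with respect to the rescaled entries $r_i/s_i$) visibly control the last coordinate $\sum_i c_i = k_n + d_n$ of the point in $\Par_{\s}$.
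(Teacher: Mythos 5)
Your proposal is correct and follows essentially the same route as the paper: you characterize membership in $\Par_\s$ via the conditions $0\le x_1/s_1<1$ and $0\le x_{i+1}/s_{i+1}-x_i/s_i<1$ (the paper's Lemma \ref{lem:char}(a)), deduce that $\k$ is forced with $k_{i}-k_{i-1}=1$ exactly at the $\s$-descents of $\r$ (Lemma \ref{lem:char}(b)), and package this as the explicit inverse $\r\mapsto(\k,\r)$ with $k_i=\des_\s^{<i}(\r)$, which is exactly the paper's map $h_\s$. The bookkeeping you flag (boundary case $d_i=d_{i-1}$, the bound $k_i\le n-1$) is handled correctly in your argument.
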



To prove Lemma \ref{lem:gbij}, we will construct an inverse for
$g_\s;$ in other words, we will show how to recover the quotient
vector $\k$ from the remainder vector $\r.$ We give the following
preliminary definition and lemma.

\begin{defn}
Let $\r \in \N^n.$ We say that $i$ is an {\it $\s$-descent of
  $\r$} if $\displaystyle \frac{r_i}{s_i} > \frac{r_{i+1}}{s_{i+1}}.$  
	
We denote by $\Des_\s(\r)$ the set of $\s$-descents of $\r,$ and let
$\des_\s(\r) = \# \Des_\s(\r)$ be its cardinality. 
For any $1 \le i \le n$, we let $\Des_\s^{<i}(\r)$ be the set of
$\s$-descents of $\r$ whose indices are strictly smaller than $i:$ 
	\[ \Des_\s^{<i}(\r) = \{ j < i \ | \ j \text{ is an
          $\s$-descent of $\r$}\},\] 
and $\des_\s^{<i}(\r) = \# \Des_\s^{<i}(\r)$ be its cardinality.

We similarly define $\Des^{<i}$ and $\des^{<i}$ for (regular) descents.
\end{defn}

\begin{lem}\label{lem:char}	\begin{alist}
	\itm A point $\x = (x_1,\dots, x_n)$ is in $\Par_\s$ if and
  only if
  \[ 0 \le \frac{x_1}{s_1} < 1, \text{ and } 0 \le
        \frac{x_{i+1}}{s_{i+1}} - \frac{x_i}{s_i} < 1, \quad 
        1 \le i \le n-1.\]  
\itm Let $\r \in \Psi_n$.
 Then a point $(\k,\r) = ( (k_1,\dots,k_n), \r)$ is in $\KR_\s$ if and
 only if $k_1=0$ and for any $i \in \{1,\dots,n-1\},$ 
 \begin{equation}\label{equ:kdiff}
	 k_{i+1}-k_i = \begin{cases}1, & \text{if $i$ is an
       $\s$-descent of $\r$;} \\ 0, & \text{otherwise.}\end{cases} 
	 \end{equation}

	\end{alist}
\end{lem}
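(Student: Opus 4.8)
The plan is to prove parts (a) and (b) in sequence, using (a) as a stepping stone toward (b). For part (a), I would start directly from the definition of $\Par_\s$ as the half-open parallelepiped generated by the vectors $\v_j = (0,\dots,0,s_j,s_{j+1},\dots,s_n)$ for $j=1,\dots,n$ (so $\v_n = (0,\dots,0,s_n)$). Writing $\x = \sum_{j=1}^n c_j \v_j$ with $c_j \in [0,1)$ and reading off coordinates, one gets $x_i = \sum_{j \le i} c_j s_i$, hence $\tfrac{x_i}{s_i} = \sum_{j=1}^i c_j$. Therefore $\tfrac{x_1}{s_1} = c_1$ and $\tfrac{x_{i+1}}{s_{i+1}} - \tfrac{x_i}{s_i} = c_{i+1}$ for $1 \le i \le n-1$. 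Since the map $(c_1,\dots,c_n) \mapsto \x$ is linear and invertible (the $\v_j$ are independent), $\x \in \Par_\s$ if and only if each $c_j \in [0,1)$, which is exactly the stated system of inequalities. This direction is essentially a change of coordinates and should be routine.

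For part (b), suppose $\r \in \Psi_n$, so $0 \le r_i \le s_i - 1$, and consider a candidate point $(\k,\r)$ with $\k \in \langle n-1\rangle^n$. By definition of $\KR_\s$, the pair $(\k,\r)$ lies in $\KR_\s$ if and only if the point $\x$ with $x_i = k_i s_i + r_i$ lies in $\Par_\s \cap \Z^n$ \emph{and} $\r$ is genuinely the remainder vector, i.e. $r_i = x_i \bmod s_i$ and $k_i = \lfloor x_i/s_i\rfloor$. The latter is automatic here because $0 \le r_i \le s_i-1$ forces $k_i s_i + r_i$ to have quotient $k_i$ and remainder $r_i$ upon division by $s_i$. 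So the condition reduces to $\x \in \Par_\s$, and I would now apply part (a). The first inequality $0 \le x_1/s_1 < 1$ becomes $0 \le k_1 + r_1/s_1 < 1$; since $0 \le r_1/s_1 < 1$ and $k_1 \in \langle n-1 \rangle \subseteq \N$, this holds if and only if $k_1 = 0$. For the remaining inequalities, compute
\[ \frac{x_{i+1}}{s_{i+1}} - \frac{x_i}{s_i} = (k_{i+1} - k_i) + \left(\frac{r_{i+1}}{s_{i+1}} - \frac{r_i}{s_i}\right), \]
and note that the fractional correction term lies in the open interval $(-1,1)$ because each $r_j/s_j \in [0,1)$. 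Requiring this whole quantity to lie in $[0,1)$ then pins down $k_{i+1} - k_i$ uniquely: it must equal $1$ when $r_{i+1}/s_{i+1} - r_i/s_i < 0$, i.e.\ when $i$ is an $\s$-descent of $\r$, and it must equal $0$ otherwise. This is precisely \eqref{equ:kdiff}.

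I anticipate the main obstacle — or rather the one point that needs care — to be the bookkeeping in this last step: verifying that the correction term $\tfrac{r_{i+1}}{s_{i+1}} - \tfrac{r_i}{s_i}$ is strictly in $(-1,1)$ (not just $[-1,1]$) so that the integer $k_{i+1}-k_i$ is forced rather than merely constrained, and confirming that when $i$ is not an $\s$-descent the correction is in $[0,1)$ so $k_{i+1}-k_i = 0$ works, while when $i$ is an $\s$-descent the correction is in $(-1,0)$ so $k_{i+1}-k_i = 1$ works. Both follow from $0 \le r_j < s_j$, but one should state them explicitly. Finally I would remark that the condition $k_1 = 0$ together with \eqref{equ:kdiff} automatically forces $k_i = \des_\s^{<i}(\r) \le n-1$, so the constraint $\k \in \langle n-1\rangle^n$ is no extra restriction — this observation is what sets up the inverse of $g_\s$ in the proof of Lemma~\ref{lem:gbij}.
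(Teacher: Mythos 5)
Your proposal is correct and follows essentially the same route as the paper: part (a) by solving for the coefficients $c_j$ in the generator expansion (giving $c_1 = x_1/s_1$ and $c_{i+1} = x_{i+1}/s_{i+1} - x_i/s_i$), and part (b) by substituting $x_i = k_i s_i + r_i$ into (a) and using that the correction term $r_{i+1}/s_{i+1} - r_i/s_i$ lies strictly in $(-1,1)$ to pin down $k_1=0$ and the differences $k_{i+1}-k_i$. The closing remark that $k_i = \des_\s^{<i}(\r)$ is automatically in range is a nice touch that the paper defers to the construction of $h_\s$, but the substance of the argument is the same.
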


\begin{proof}
First, $\x \in \Par_\s$ if and only if there exists $c_1, \dots, c_n$
in $[0,1)$ such that  
	\begin{eqnarray*}
	\x &=& c_n (0, \dots, 0, s_n) + c_{n-1} (0, \dots, 0,
        s_{n-1}, s_n) + \cdots + c_1 (s_1, \dots, s_n)  \\ 
       &=& ( s_1 c_1, \ s_2 (c_1+c_2), \ \dots, \ s_n
        (c_1+c_2+ \cdots + c_n)). 
	\end{eqnarray*}
This is equivalent to the existence of $c_1, \dots, c_n \in [0,1)$
  such that  
 \begin{eqnarray*} \ds\frac{x_1}{s_1} & = & c_1 \\ \ds\frac{x_2}{s_2}
   & = & c_1 + c_2 \\ & \vdots & \\ \frac{x_{n}}{s_n} & = &  c_1 + c_2
   + \cdots + c_n \end{eqnarray*}  
Solving the above equations for $c_i$'s, we see that a) follows.

To prove b), we let $x_i = k_i s_i + r_i$ for each $i.$ Note that
$(\k,\r)\in\KR_\s$ if and only if $\x:=(x_1,\dots,x_n) \in \Par_\s
\cap \Z^n,$ which is equivalent to $\k \in \Z^n$ and $\x \in \Par_\s.$
Applying part a) to $\x,$ we get that $(\k, \r) \in \KR_\s$ if and
only if $\k \in \Z^n$ and 
\[ 0 \le \frac{k_1 s_1 + r_1}{s_1} < 1, \text{ and }  0 \le
\frac{k_{i+1} s_{i+1} + r_{i+1}}{s_{i+1}} - \frac{k_i s_i + r_i}{s_i}
< 1, \quad 1 \le i \le n-1.\] 
The above inequalities are equivalent to
\[ 0 \le k_1 + \frac{r_1}{s_1} < 1, \text{ and }  0 \le k_{i+1}-k_i +
\frac{r_{i+1}}{s_{i+1}} - \frac{r_i}{s_i} < 1, \quad 1 \le
i \le n-1.\] 
Note that $0 \le \frac{r_1}{s_1}<1$ and $-1 < \frac{r_{i+1}}{s_{i+1}}
- \frac{r_i}{s_i} < 1$. One checks that 
\[ k_1 \in \Z \text{ and } 0 \le k_1 + \frac{r_1}{s_1} < 1
\ \Longleftrightarrow \ k_1 = 0,\] 
and for any $1 \le i \le n-1$ given $k_i \in \Z,$  
\[ k_{i+1} \in \Z \text{ and }  0 \le k_{i+1}-k_i +
\frac{r_{i+1}}{s_{i+1}} - \frac{r_i}{s_i} < 1 \ \Longleftrightarrow
k_{i+1}-k_i \text{ is given as in \eqref{equ:kdiff}.}\] 
Therefore, we have b).
\end{proof}

Part b) of Lemma \ref{lem:char} provides us a way to construct the
inverse of $g_\s.$ 
For any $\r \in \Psi_n$, we define $h_\s(\r) = (\k,\r)=( (k_1,\dots,k_n),
\r)$, where 
\[ k_i = \des_\s^{<i}(\r), \quad 1 \le i \le n.\]
By Lemma \ref{lem:char}(b) we see that $h_\s$ is the inverse of
$g_\s.$ Hence, we have proved Lemma \ref{lem:gbij}. Our discussion
also gives us the inverse map of $\REM_\s.$ 

\begin{thm}\label{thm:REMinv}
The inverse of the map $\REM_\s$ (defined in Definition
\ref{defn:REM}) is: 
{\small	\begin{eqnarray*}
   \REM_\s^{-1}: \Psi_n &\to& \Par_\s \cap \Z^n \\ 
	\r= (r_1, \dots, r_n) &\mapsto& (\des_\s^{<1}(\r) s_1 + r_1,
        \dots, \des_\s^{<n}(\r) s_n + r_n)  
\end{eqnarray*}}
\end{thm}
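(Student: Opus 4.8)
The plan is to assemble the inverse of $\REM_\s$ from the factorization $\REM_\s = g_\s \circ f_\s$ that has already been set up, so that $\REM_\s^{-1} = f_\s^{-1} \circ g_\s^{-1}$. We have noted that $f_\s$ is a bijection from $\Par_\s \cap \Z^n$ onto $\KR_\s$, with inverse sending $(\k,\r) = ((k_1,\dots,k_n),(r_1,\dots,r_n))$ to the point $(k_1 s_1 + r_1, \dots, k_n s_n + r_n)$; this is just Definition~\ref{defn:REM} read backwards. We have also just shown that $g_\s^{-1} = h_\s$, where $h_\s(\r) = (\k,\r)$ with $k_i = \des_\s^{<i}(\r)$ for $1 \le i \le n$. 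Composing, $\REM_\s^{-1}(\r) = f_\s^{-1}(h_\s(\r)) = f_\s^{-1}\big((\des_\s^{<1}(\r),\dots,\des_\s^{<n}(\r)),\r\big)$, and applying the formula for $f_\s^{-1}$ coordinatewise gives exactly
\[
\REM_\s^{-1}(\r) = (\des_\s^{<1}(\r)\, s_1 + r_1,\ \dots,\ \des_\s^{<n}(\r)\, s_n + r_n),
\]
which is the claimed expression.

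In writing this up I would first recall explicitly why $f_\s^{-1}$ has the stated form: given $(\k,\r) \in \KR_\s$, the point $\x = (k_1 s_1 + r_1, \dots, k_n s_n + r_n)$ lies in $\Par_\s \cap \Z^n$ by definition of $\KR_\s$, and the integer division $x_i = k_i s_i + r_i$ with $0 \le r_i \le s_i - 1$ recovers exactly $(k_i, r_i)$, so $f_\s(\x) = (\k,\r)$. Then I would invoke Lemma~\ref{lem:gbij} (already proved via $h_\s$) to say $g_\s^{-1} = h_\s$. The only thing that genuinely needs Lemma~\ref{lem:char}(b) is the verification that $h_\s$ lands in $\KR_\s$ and inverts $g_\s$ — and that has already been done in the paragraph preceding the theorem — so at this point the theorem is a matter of bookkeeping: chase an arbitrary $\r \in \Psi_n$ through $h_\s$ and then through $f_\s^{-1}$.

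There is essentially no hard step; the one place to be careful is making sure the domains and codomains match up — that $h_\s(\r)$ really is a point of $\KR_\s$ (so that $f_\s^{-1}$ can be applied to it), which is precisely the content of Lemma~\ref{lem:char}(b): $k_1 = \des_\s^{<1}(\r) = 0$ since there are no descents before index $1$, and $k_{i+1} - k_i = \des_\s^{<i+1}(\r) - \des_\s^{<i}(\r)$ equals $1$ if $i$ is an $\s$-descent of $\r$ and $0$ otherwise, matching \eqref{equ:kdiff}. Granting that, the composition is immediate. I would therefore present the proof as a short two-line computation: state $\REM_\s^{-1} = f_\s^{-1} \circ g_\s^{-1} = f_\s^{-1} \circ h_\s$, then substitute and read off each coordinate.

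\begin{proof}
Since $\REM_\s = g_\s \circ f_\s$ and both $f_\s$ (a bijection onto $\KR_\s$) and $g_\s$ (Lemma~\ref{lem:gbij}) are bijections, so is $\REM_\s$, with $\REM_\s^{-1} = f_\s^{-1} \circ g_\s^{-1}$. From Definition~\ref{defn:REM}, the inverse of $f_\s$ sends $(\k,\r) = ((k_1,\dots,k_n),(r_1,\dots,r_n)) \in \KR_\s$ to $(k_1 s_1 + r_1, \dots, k_n s_n + r_n)$: this point lies in $\Par_\s \cap \Z^n$ by definition of $\KR_\s$, and dividing its $i$th coordinate by $s_i$ recovers quotient $k_i$ and remainder $r_i$ because $0 \le r_i \le s_i - 1$. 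We showed above that $g_\s^{-1} = h_\s$, where $h_\s(\r) = ((\des_\s^{<1}(\r),\dots,\des_\s^{<n}(\r)),\r)$. Therefore, for $\r \in \Psi_n$,
\[
\REM_\s^{-1}(\r) = f_\s^{-1}(h_\s(\r)) = f_\s^{-1}\big((\des_\s^{<1}(\r),\dots,\des_\s^{<n}(\r)),\r\big) = (\des_\s^{<1}(\r)\,s_1 + r_1,\ \dots,\ \des_\s^{<n}(\r)\,s_n + r_n),
\]
as claimed.
\end{proof}
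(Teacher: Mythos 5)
Your proof is correct and is essentially the paper's own argument: the paper also obtains $\REM_\s^{-1}$ by composing $f_\s^{-1}$ with the inverse $h_\s$ of $g_\s$, where $h_\s(\r)=((\des_\s^{<1}(\r),\dots,\des_\s^{<n}(\r)),\r)$ is justified via Lemma~\ref{lem:char}(b). Your write-up just makes the bookkeeping (why $h_\s(\r)\in\KR_\s$ and why $f_\s^{-1}$ has the stated coordinatewise form) slightly more explicit than the paper does.
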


Note that $\des_\s^{<n}(\r) = \des_\s(\r),$ and thus when $s_n$=1, 
\[ \des_\s^{<n}(\r) s_n + r_n = \des_\s(\r).\]
Hence we have the following result.

\begin{cor}\label{cor:sn=1}
Suppose $s_n = 1.$ Then
\begin{eqnarray*}
  \L^i(\Par_\s) &=&  \{ \x \in \Par_\s \cap \Z^n \ |
     \ \des_\s(\REM_\s(\x)) = i\} \\ 
    &=& \{ \REM_\s^{-1}(\r) \ | \ \des_\s(\r) = i, \ \r \in
    \Psi_n \},  
\end{eqnarray*}
and
\[ \l^i(\Par_\s) = \# \{ \r \in \Psi_n \ | \ \des_\s(\r) = i\}.\]  
\end{cor}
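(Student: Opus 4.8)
The plan is to derive Corollary~\ref{cor:sn=1} directly from Theorem~\ref{thm:REMinv} together with the reduction established in Lemma~\ref{lem:cnnt}. Recall that Lemma~\ref{lem:REMbij} already tells us $\REM_\s$ is a bijection from $\Par_\s \cap \Z^n$ to $\Psi_n$; so the only new content here is identifying, for a lattice point $\x \in \Par_\s$, which ``slab'' $\L^i(\Par_\s)$ it lands in, i.e.\ computing its last coordinate $x_n$ in terms of $\r = \REM_\s(\x)$.

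First I would unwind the definition of $\REM_\s^{-1}$ from Theorem~\ref{thm:REMinv}. Writing $\r = \REM_\s(\x)$, we have by construction $x_n = k_n s_n + r_n$ where $k_n = \des_\s^{<n}(\r)$. Next I observe that $\des_\s^{<n}(\r) = \des_\s(\r)$, since an $\s$-descent of $\r = (r_1,\dots,r_n)$ is by definition an index $i$ with $1 \le i \le n-1$, so every $\s$-descent automatically has index $< n$; this is exactly the remark already stated immediately after Theorem~\ref{thm:REMinv}. Then, invoking the hypothesis $s_n = 1$, the term $k_n s_n + r_n$ collapses: $r_n \in \langle s_n - 1\rangle = \langle 0 \rangle = \{0\}$ forces $r_n = 0$, and $s_n = 1$ gives $x_n = k_n = \des_\s(\r) = \des_\s(\REM_\s(\x))$. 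Thus the last coordinate of $\x$ equals $\des_\s(\REM_\s(\x))$.

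With that identity in hand, the three displayed equalities fall out formally. For the first line, $\x \in \L^i(\Par_\s)$ means $\x \in \Par_\s \cap \Z^n$ with last coordinate $i$, which by the computation above is precisely $\des_\s(\REM_\s(\x)) = i$. For the second line, apply the bijection $\REM_\s$ (equivalently, substitute $\x = \REM_\s^{-1}(\r)$ and use that $\REM_\s$ and $\REM_\s^{-1}$ are mutually inverse, with $\des_\s(\REM_\s(\REM_\s^{-1}(\r))) = \des_\s(\r)$); this rewrites the set as $\{\REM_\s^{-1}(\r) \mid \des_\s(\r) = i,\ \r \in \Psi_n\}$. For the cardinality statement, since $\REM_\s^{-1}$ is injective, $\l^i(\Par_\s) = \#\L^i(\Par_\s) = \#\{\r \in \Psi_n \mid \des_\s(\r) = i\}$.

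I do not anticipate a genuine obstacle here: the corollary is essentially a packaging of Theorem~\ref{thm:REMinv} under the specialization $s_n = 1$, and all the work has been front-loaded into Lemmas~\ref{lem:char}, \ref{lem:gbij}, and \ref{lem:REMbij}. The only point requiring a moment's care is the bookkeeping that $\des_\s^{<n} = \des_\s$ (so that no off-by-one error creeps in at the top index) and the observation that $s_n = 1$ simultaneously kills $r_n$ and trivializes the multiplication by $s_n$; both are immediate. One might also note, for completeness, that this corollary is consistent with Lemma~\ref{lem:cnnt}: combining $\l^i(\Par_\s) = \#\{\r \in \Psi_n \mid \des_\s(\r) = i\}$ with \eqref{equ:delta1} recovers the $\delta$-vector of $P_\s$ in terms of $\s$-descent statistics on $\s$-inversion sequences, which is the bridge to the Savage--Schuster results advertised in the introduction.
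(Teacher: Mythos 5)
Your proposal is correct and matches the paper's own (very brief) justification: the paper derives this corollary from Theorem~\ref{thm:REMinv} via exactly the remark that $\des_\s^{<n}(\r)=\des_\s(\r)$, so that when $s_n=1$ the last coordinate $\des_\s^{<n}(\r)s_n+r_n$ collapses to $\des_\s(\r)$, with the set and cardinality statements then following from the bijectivity of $\REM_\s$. Your additional explicit note that $r_n\in\langle s_n-1\rangle=\{0\}$ forces $r_n=0$ is a point the paper leaves implicit, but it is the same argument.
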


Applying this to $s^*$ whose last coordinate is $1$ by definition, we
get the next corollary. 
\begin{cor}\label{cor:s*}
\begin{eqnarray*}
 \L^i(\Par_{\s^*}) &=&  \{ \x \in \Par_{\s^*} \cap \Z^{n+1} \ |
 \ \des_{\s^*}(\REM_{\s^*}(\x)) = i\} \\ 
	&=& \{ \REM_{\s^*}^{-1}(\r) \ | \ \des_{\s^*}(\r) = i, \ \r
        \in \Psi_n  \times \langle 0 \rangle \}, 
\end{eqnarray*}
and
\[ \l^i(\Par_{\s^*}) = \# \{ \r \in \Psi_n  \times \langle 0 \rangle  \ | 
\ \des_{\s^*}(\r) = i\}.\]  
\end{cor}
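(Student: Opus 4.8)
The plan is to obtain this statement directly from Corollary~\ref{cor:sn=1} by applying it with the sequence $\s^*$ in place of $\s$. First I would observe that $\s^* = (s_1, \dots, s_n, 1)$ is a sequence of $n+1$ positive integers whose last entry equals $1$, so it satisfies the hypothesis of Corollary~\ref{cor:sn=1} (with $n$ replaced by $n+1$ throughout). All the ingredients appearing there --- the map $\REM$ and its inverse, the notions $\Des$, $\des$, and the parallelepiped $\Par$ --- are by construction functions of the sequence that is fed to them alone, so nothing about their definitions changes under this substitution, and Lemma~\ref{lem:REMbij}, Theorem~\ref{thm:REMinv}, and Corollary~\ref{cor:sn=1} all apply to $\s^*$ verbatim.

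Carrying this out, Corollary~\ref{cor:sn=1} with $\s$ replaced by $\s^*$ yields
\[ \L^i(\Par_{\s^*}) = \{ \x \in \Par_{\s^*} \cap \Z^{n+1} \ | \ \des_{\s^*}(\REM_{\s^*}(\x)) = i \} = \{ \REM_{\s^*}^{-1}(\r) \ | \ \des_{\s^*}(\r) = i \}, \]
where in the last set $\r$ ranges over the set associated to $\s^*$, together with $\l^i(\Par_{\s^*}) = \#\{ \r \ | \ \des_{\s^*}(\r) = i \}$ over the same range. The only bookkeeping point is to identify that index set: by the definition of the associated set given at the start of Section~\ref{sec:bijs}, the set attached to $\s^* = (s_1, \dots, s_n, 1)$ is $\langle s_1 - 1\rangle \times \cdots \times \langle s_n - 1\rangle \times \langle 1-1\rangle = \Psi_n \times \langle 0\rangle$. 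Substituting this description into the displays above gives precisely the three asserted equalities.

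There is essentially no obstacle here: the corollary is a pure specialization of Corollary~\ref{cor:sn=1}. The one thing I would take care to state explicitly is exactly the point above --- that appending a $1$ to $\s$ does not interfere with the earlier analysis, because all the constructions in Section~\ref{sec:bijs} depend only on the sequence supplied to them --- so that the reader sees why no hypothesis is being quietly used and why the length-$(n+1)$ instances of the earlier lemmas are legitimate.
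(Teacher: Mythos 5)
Your proposal is correct and is exactly the paper's argument: the paper obtains Corollary~\ref{cor:s*} by applying Corollary~\ref{cor:sn=1} to $\s^*$, whose last coordinate is $1$ by definition, with the associated index set becoming $\Psi_n \times \langle 0\rangle$. Your extra remark spelling out why the length-$(n+1)$ instance is legitimate is a harmless elaboration of the same specialization.
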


The above two corollaries, together with Lemma \ref{lem:cnnt}, give
the following result on $\delta$-vectors of the $\s$-lecture hall
polytope. 
\begin{thm}\label{thm:delta-des}
Suppose that $\s = (s_1, \dots, s_n)$ is a sequence of positive
integers. Then the $\delta$-vector of the $\s$-lecture hall polytope
$P_\s$ is given by 
\begin{equation}\label{equ:delta-des}
	\delta_{P_{\s^*},i} = \# \{ \r \in \Psi_n\times \langle 0\rangle \ |
        \ \des_{\s^*}(\r) = i\},\quad 0 \le i \le n. 
\end{equation}
Furthermore, if $s_n = 1$ then
\begin{equation}\label{equ:delta-des1}
\delta_{P_\s,i} = \# \{ \r \in \Psi_n \ | \ \des_{\s}(\r) = i\}, \quad 0
\le i \le n. 
\end{equation}
\end{thm}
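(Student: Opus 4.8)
The plan is to assemble the statement directly from the three facts already in hand: Lemma~\ref{lem:cnnt}, which converts $\delta$-vectors into gradings of lecture hall parallelepipeds, together with Corollaries~\ref{cor:sn=1} and~\ref{cor:s*}, which count those gradings by $\s$-descents of $\s$-inversion sequences. No new construction is needed; the content lies entirely in matching up the index ranges and recognizing that the set $\Psi$ attached to the padded sequence $\s^*=(s_1,\dots,s_n,1)$ is precisely $\Psi_n\times\langle0\rangle$.

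First I would establish \eqref{equ:delta-des}. By \eqref{equ:delta} we have $\delta_{P_\s,i}=\l^i(\Par_{\s^*})$ for $0\le i\le n$; moreover, since the last coordinate of $\s^*$ equals $1$, Lemma~\ref{lem:cnnt} applied to the sequence $\s^*$ gives, via \eqref{equ:delta1}, that $\delta_{P_{\s^*},i}=\l^i(\Par_{\s^*})$ for $0\le i\le n$ as well (with $\delta_{P_{\s^*},n+1}=0$). Corollary~\ref{cor:s*} then rewrites $\l^i(\Par_{\s^*})$ as $\#\{\r\in\Psi_n\times\langle0\rangle \mid \des_{\s^*}(\r)=i\}$. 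Chaining these equalities yields \eqref{equ:delta-des} (and, incidentally, shows $\delta_{P_\s,i}=\delta_{P_{\s^*},i}$ on this range, which is consistent with both simplices having normalized volume $\prod_i s_i$).

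For the second statement, suppose $s_n=1$. Then \eqref{equ:delta1} gives $\delta_{P_\s,i}=\l^i(\Par_\s)$ for $0\le i\le n-1$ and $\delta_{P_\s,n}=0$, while Corollary~\ref{cor:sn=1} gives $\l^i(\Par_\s)=\#\{\r\in\Psi_n\mid\des_\s(\r)=i\}$ for every $i$. To cover the full range $0\le i\le n$ it then suffices to treat the endpoint $i=n$: a sequence of length $n$ has possible descent positions only among $1,\dots,n-1$, so $\des_\s(\r)\le n-1$ for all $\r\in\Psi_n$, whence $\#\{\r\in\Psi_n\mid\des_\s(\r)=n\}=0=\delta_{P_\s,n}$. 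This completes \eqref{equ:delta-des1}.

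The only place any care is required --- hence the ``main obstacle'', such as it is --- is the bookkeeping at the two endpoints: confirming that $\l^i(\Par_{\s^*})$ really computes $\delta_{P_{\s^*},i}$ without an off-by-one (it is exactly the hypothesis $s_n=1$ in Lemma~\ref{lem:cnnt}, now invoked for $\s^*$, that legitimizes the value $i=n$), and verifying that no $\s$-descent can occur at the last position, so that the $i=n$ term vanishes on both sides of \eqref{equ:delta-des1}. Everything else is direct substitution.
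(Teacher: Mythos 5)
Your proposal is correct and follows exactly the paper's route: the theorem is obtained by combining Lemma~\ref{lem:cnnt} (equations \eqref{equ:delta} and \eqref{equ:delta1}, the latter applied to $\s^*$, whose last entry is $1$) with Corollaries~\ref{cor:s*} and~\ref{cor:sn=1}. Your extra bookkeeping at the endpoint $i=n$ (no length-$n$ sequence has $n$ descents) and the observation that $\delta_{P_\s,i}=\delta_{P_{\s^*},i}=\l^i(\Par_{\s^*})$ for $0\le i\le n$ just make explicit what the paper leaves implicit.
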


We note that equation \eqref{equ:delta-des1} agrees with Corollary 4 in \cite{SavSch2012}.

The bijection $\REM$ is not always the most convenient one to
use. Fortunately, there are many bijections between $\Par_\s \cap
\Z^n$ and $\Psi_n$ that can be constructed from $\REM$: for any bijection  
\[ b: \Psi_n \to \Psi_n \] 
the composition of $\REM$ and $b$ is another bijection from $\Par_\s
\cap \Z^n$ to $\Psi_n$

\begin{defn}\label{defn:REMs}
Let $\q = (q_1, \dots, q_n) \in  \Psi_n$. 

\begin{alist}
\itm We define 
\begin{eqnarray*}
	\REM_\s^\q: \Par_\s \cap \Z^n &\to& \Psi_n\\ 
	\x= (x_1, \dots, x_n) &\mapsto& \y=(y_1, \dots, y_n),
\end{eqnarray*}
where \[y_i = x_i + q_i \ \mod s_i.\]
Note that when $\q = (0, \dots, 0),$ the map $\REM_\s^\q$ is the same
as $\REM_\s.$ 
\itm We define 
\begin{eqnarray*}
	\bREM_\s^\q: \Par_\s \cap \Z^n &\to& \Psi_n\\ 
	\x= (x_1, \dots, x_n) &\mapsto& \z=(z_1, \dots, z_n),
\end{eqnarray*}
where \[x_i + z_i = q_i \ \mod s_i.\]
When $\q = (0, \dots, 0),$ we abbreviate $\bREM_\s^\q$ to $\bREM_\s.$
\end{alist}
\end{defn}

\begin{lem}\label{lem:morebij}
Let $\q = (q_1, \dots, q_n) \in  \Psi_n$. 
Then both $\REM_\s^\q$ and $\bREM_\s^\q$ are bijections from $\Par_\s
\cap \Z^n$ to $\Psi_n$. 
\end{lem}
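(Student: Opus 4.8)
The plan is to reduce the statement to Lemma~\ref{lem:REMbij}, which already establishes that $\REM_\s$ is a bijection from $\Par_\s\cap\Z^n$ to $\Psi_n$. The key observation is that both $\REM_\s^\q$ and $\bREM_\s^\q$ can be written as compositions of $\REM_\s$ with a bijection $b:\Psi_n\to\Psi_n$, and as noted in the paragraph preceding Definition~\ref{defn:REMs}, any such composition is automatically a bijection from $\Par_\s\cap\Z^n$ to $\Psi_n$. So the real content is to exhibit, for each of the two maps, the appropriate $b$ and check it is a well-defined bijection of $\Psi_n$ with itself.

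First I would handle $\REM_\s^\q$. Given $\x\in\Par_\s\cap\Z^n$, write $\REM_\s(\x)=\r=(r_1,\dots,r_n)$, so $r_i\equiv x_i\pmod{s_i}$ with $r_i\in\langle s_i-1\rangle$. Then the $i$th coordinate of $\REM_\s^\q(\x)$ is $x_i+q_i \bmod s_i = r_i+q_i \bmod s_i$, which depends only on $r_i$ (and the fixed $q_i$). Thus $\REM_\s^\q = b_\q\circ\REM_\s$, where $b_\q:\Psi_n\to\Psi_n$ sends $(r_1,\dots,r_n)$ to $(r_1+q_1\bmod s_1,\dots,r_n+q_n\bmod s_n)$. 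Coordinatewise, $r_i\mapsto r_i+q_i\bmod s_i$ is a bijection of $\langle s_i-1\rangle$ with itself (translation on $\Z/s_i\Z$), with inverse $r_i\mapsto r_i-q_i\bmod s_i$; hence $b_\q$ is a bijection of $\Psi_n$. Therefore $\REM_\s^\q$ is a bijection. The argument for $\bREM_\s^\q$ is analogous: its $i$th coordinate is $q_i-x_i\bmod s_i = q_i-r_i\bmod s_i$, so $\bREM_\s^\q = \bar b_\q\circ\REM_\s$ where $\bar b_\q$ sends $(r_1,\dots,r_n)$ to $(q_1-r_1\bmod s_1,\dots,q_n-r_n\bmod s_n)$; coordinatewise each map $r_i\mapsto q_i-r_i\bmod s_i$ is an involution of $\langle s_i-1\rangle$, hence a bijection, so $\bar b_\q$ and therefore $\bREM_\s^\q$ are bijections.

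I do not expect a serious obstacle here; the only point requiring a little care is confirming that $\REM_\s^\q(\x)$ and $\bREM_\s^\q(\x)$ genuinely depend on $\x$ only through $\REM_\s(\x)$ — i.e.\ that reducing $x_i$ to its residue $r_i$ loses nothing — which is immediate from the definitions of the maps modulo $s_i$. One could also give a direct proof by writing down the inverses explicitly: the inverse of $\REM_\s^\q$ is $\REM_\s^{-1}$ composed with $(y_1,\dots,y_n)\mapsto(y_1-q_1\bmod s_1,\dots,y_n-q_n\bmod s_n)$, and the inverse of $\bREM_\s^\q$ is $\REM_\s^{-1}$ composed with $(z_1,\dots,z_n)\mapsto(q_1-z_1\bmod s_1,\dots,q_n-z_n\bmod s_n)$; verifying these compose to the identity in both orders is a one-line modular computation. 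Either route is short, so the proof will be brief.
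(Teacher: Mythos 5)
Your proof is correct and follows essentially the same route as the paper: the paper likewise factors $\bREM_\s^\q = \Phi_\s^\q \circ \REM_\s$, where $\Phi_\s^\q$ (your $\bar b_\q$, sending $\r$ to $\z$ with $r_i+z_i \equiv q_i \pmod{s_i}$) is a bijection of $\Psi_n$, and treats $\REM_\s^\q$ analogously. Your extra details (explicit inverses, the translation/involution observation on each $\langle s_i-1\rangle$) just spell out what the paper dismisses with ``clearly.''
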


\begin{proof} Let
\begin{eqnarray}
	\Phi_\s^\q: \Psi_n &\to& \Psi_n \label{equ:phi} \\  
	\r= (r_1, \dots, r_n) &\mapsto& \z=(z_1, \dots, z_n), \nonumber
\end{eqnarray}
where \[r_i + z_i = q_i \ \mod s_i.\]
Clearly, $\Phi_\s^\q$ is a bijection and $\bREM_\s^\q = \Phi_\s^\q
\circ \REM.$ Hence, $\bREM_\s^\q$ is a bijection. The proof is similar
for $\REM_\s^\q.$ 
\end{proof}

\section{The anti-lecture hall parallelepiped}\label{sec:anti}
In this section, we will focus on the case when $\s = (n, n-1, \dots,
1).$ For consistency with the terminology in \cite{CorSav2003}, we
call the associated parallelepiped the {\it anti-lecture hall
parallelepiped}. The following theorem is the main result of this section, 
originally proved by Corteel-Lee-Savage \cite[Corollary 4]{CorLeeSav2005}.  

\begin{thm}\label{thm:ehrh-anti}
	The Ehrhart polynomial of the anti-lecture hall polytope
        $P_{(n,n-1,\dots,2,1)}$ is the same as that of the
        $n$-dimensional cube: 
	\[ i(P_{(n,n-1,\dots,2,1)}, t) = (t+1)^n;\]
or equivalently, the $\delta$-vector of $P_{(n,n-1,\dots,2,1)}$ is
given by 
 \[ \delta_{P_{(n,n-1,\dots,2,1)}, i} = A(n,i+1). \] 
\end{thm}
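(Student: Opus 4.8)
The plan is to apply Theorem~\ref{thm:delta-des} with $\s = (n, n-1, \dots, 2, 1)$ and reduce the problem to a purely combinatorial statement about $\s$-descents of $\s$-inversion sequences. Since the last coordinate $s_n = 1$, equation~\eqref{equ:delta-des1} applies directly, giving
\[
\delta_{P_{(n,n-1,\dots,1)}, i} = \#\{\r \in \Psi_n \ | \ \des_\s(\r) = i\},
\]
where $\Psi_n = \langle n-1 \rangle \times \langle n-2 \rangle \times \cdots \times \langle 1 \rangle \times \langle 0 \rangle$. So it suffices to show that the number of such sequences with exactly $i$ $\s$-descents equals the Eulerian number $A(n, i+1)$. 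By Lemma~\ref{lem:invseq}, $A(n, i+1)$ counts inversion sequences of length $n$ with $i$ regular descents, and these also lie in $\Psi_n$ (the set of inversion sequences for $\S_n$ is exactly $\langle n-1\rangle \times \cdots \times \langle 0 \rangle$). Thus the whole theorem comes down to producing a bijection $\beta \colon \Psi_n \to \Psi_n$ such that $\des_\s(\beta(\r)) = \des(\r)$ for all $\r$, i.e., a statistic-preserving bijection carrying regular descents to $\s$-descents.

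The key step is to construct $\beta$ explicitly. Here $\s$-descents are defined by $\frac{r_i}{s_i} > \frac{r_{i+1}}{s_{i+1}}$, i.e., $\frac{r_i}{n-i+1} > \frac{r_{i+1}}{n-i}$. The natural candidate is a coordinatewise complementation: for $\r = (r_1, \dots, r_n) \in \Psi_n$, set $\beta(\r) = \bm{z}$ where $z_i = (s_i - 1) - r_i = (n-i) - r_i$. Since $r_i$ ranges over $\langle n-i \rangle$, so does $z_i$, and $\beta$ is clearly an involution, hence a bijection of $\Psi_n$. One then checks that $\frac{z_i}{s_i} > \frac{z_{i+1}}{s_{i+1}}$ if and only if $\frac{(n-i) - r_i}{n-i+1} > \frac{(n-i-1) - r_{i+1}}{n-i}$, and after clearing denominators this should be equivalent to $r_i < r_{i+1}$ (a regular \emph{ascent}), not a descent. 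To fix the parity one composes further with the reversal of each coordinate's role or, more cleanly, observes that the map $\r \mapsto \bm{z}$ with $z_i = (n-i) - r_i$ sends $i \notin \Des(\r)$ to $i \in \Des_\s(\bm{z})$ and vice versa. Since reversing which indices are descents does not change the \emph{count} distribution only if the distribution is symmetric — and the Eulerian distribution $A(n,1), \dots, A(n,n)$ \emph{is} symmetric — we actually get $\#\{\r : \des(\r) = i\} = \#\{\bm{z} : \des_\s(\bm{z}) = n-1-i\} = A(n, n-i) = A(n, i+1)$ by the symmetry $A(n,j) = A(n, n+1-j)$. This closes the argument.

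The main obstacle I anticipate is the inequality bookkeeping in verifying that complementation swaps regular descents/ascents with $\s$-descents/$\s$-ascents: one must be careful that the denominators $s_i = n - i + 1$ and $s_{i+1} = n - i$ differ, so the cross-multiplication $\frac{z_i}{s_i} > \frac{z_{i+1}}{s_{i+1}} \iff (n-i) z_i > (n-i+1) z_{i+1}$ mixes the two coordinates asymmetrically, and one needs to confirm the boundary/equality cases ($r_i/s_i = r_{i+1}/s_{i+1}$) land on the correct side. A secondary point is justifying that $\Psi_n$ for $\s = (n, n-1, \dots, 1)$ genuinely coincides with the set of inversion sequences of $\S_n$, so that Lemma~\ref{lem:invseq} is applicable — but this is immediate from the product description of both sets. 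Once the complementation bijection and the Eulerian symmetry $A(n,j) = A(n,n+1-j)$ are in hand, the equivalence of the two formulas for the Ehrhart polynomial ($i(P,t) = (t+1)^n$ versus $\delta_{P,i} = A(n,i+1)$) follows from the standard fact, recalled in the introduction, that $(t+1)^n = \sum_i A(n,i+1)\binom{t+i}{n}$, i.e., the $\delta$-vector of $\square_n$ is the Eulerian vector.
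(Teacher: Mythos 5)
Your proposal is correct in substance, but it takes a genuinely different (and more roundabout) route than the paper. The paper's proof rests on its Lemma~\ref{lem:sdes-des}: since consecutive entries of $\s=(n,n-1,\dots,1)$ differ by $1$, for $\r\in\Psi_n$ one has $\frac{r_i}{s_i}>\frac{r_{i+1}}{s_{i+1}}\iff r_i>r_{i+1}$, so $\s$-descents of $\r$ \emph{are} regular descents of $\r$. Hence the identity map on $\Psi_n$ is already the statistic-preserving bijection you set out to build, and the theorem follows at once from equation~\eqref{equ:delta1}, Corollary~\ref{cor:sn=1} (specialized as Corollary~\ref{cor:anti}), and Lemma~\ref{lem:invseq} — no extra symmetry is needed. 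You instead use the complementation $z_i=(s_i-1)-r_i$, which reverses the descent statistic, and then invoke the symmetry $A(n,j)=A(n,n+1-j)$ of the Eulerian numbers to undo the reversal. This does close the argument, with two caveats. First, the check you defer comes out as: $i$ is an $\s$-descent of $\z$ iff $r_i\le r_{i+1}$ (a \emph{non-descent}), not iff $r_i<r_{i+1}$ (a strict ascent); the distinction matters when $r_i=r_{i+1}$, and only the non-strict version gives $\des_\s(\z)=(n-1)-\des(\r)$, which your counting step needs. Your later sentence ("sends $i\notin\Des(\r)$ to $i\in\Des_\s(\z)$ and vice versa") is the correct statement, and the cross-multiplication verifying it is essentially the same computation as the paper's Lemma~\ref{lem:sdes-des} — so once you do that bookkeeping you have, in effect, proved the paper's lemma and could have skipped the complementation entirely. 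Second, the Eulerian symmetry is standard but is an extra ingredient not proved in the paper. What each approach buys: the paper's observation yields the sharper fact that $\des_\s=\des$ on $\Psi_n$, hence a descent-preserving bijection $\REM_\s$ from $\Par_\s\cap\Z^n$ to inversion sequences (Corollaries~\ref{cor:REMinv-anti} and \ref{cor:anti}), which is the bijective content the paper is after; your route proves only the equality of counts, at the cost of an additional bijection and the symmetry of the Eulerian distribution. (Incidentally, your $\beta$ is close to, but not the same as, the paper's mod-$s_i$ complementation $\Phi_\s$ used in Sections~\ref{sec:reversal}--\ref{sec:s1=1}.)
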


The following lemma is the key ingredient for proving Theorem \ref{thm:ehrh-anti}. 
\begin{lem}\label{lem:sdes-des}
Suppose that $s, s'$ are positive integers and $s - s' = 1.$ Let $r
\in \langle s-1\rangle$ and $r' \in \langle s'-1\rangle$. Then 
\[ \frac{r}{s} > \frac{r'}{s'} \ \Longleftrightarrow \ r > r'.\]
\end{lem}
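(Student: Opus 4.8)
\textbf{Proof plan for Lemma \ref{lem:sdes-des}.}

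The plan is to reduce the claimed equivalence to a transparent arithmetic statement by clearing denominators. Since $s' = s-1$, the inequality $\frac{r}{s} > \frac{r'}{s'}$ is equivalent to $r(s-1) > r's$, i.e. $rs - r > r's$, i.e. $r > (r' - r + r')\cdots$ — more cleanly, $rs - r's > r$, that is $s(r - r') > r$. So the forward direction will follow once I observe that $s(r-r') > r \ge 0$ forces $r - r' > 0$, hence $r > r'$ (here I use $r \ge 0$ and $s > 0$). Conversely, suppose $r > r'$; since $r, r'$ are integers we have $r - r' \ge 1$, so $s(r - r') \ge s > r$, the last inequality because $r \in \langle s-1 \rangle$ means $r \le s - 1 < s$. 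This gives $s(r-r') > r$, which unwinds back to $\frac{r}{s} > \frac{r'}{s'}$.

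First I would rewrite $\frac{r}{s} > \frac{r'}{s'}$ as $s'r > sr'$ (legitimate since $s, s' > 0$), then substitute $s' = s - 1$ to get $(s-1)r > sr'$, and rearrange to the single clean inequality
\[ s(r - r') > r. \]
Second, for the ``$\Rightarrow$'' direction I read off from $s(r-r') > r \ge 0$ and $s > 0$ that $r - r' > 0$. Third, for the ``$\Leftarrow$'' direction I use integrality ($r - r' \ge 1$) together with the range constraint $r \le s-1$ to get $s(r-r') \ge s > s - 1 \ge r$. Each implication is then just a matter of reversing the denominator-clearing step.

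I do not anticipate a genuine obstacle here; the only point requiring the hypotheses in an essential way is the converse, where both the integrality of $r, r'$ and the bound $r \le s-1$ (equivalently $r \in \langle s-1\rangle$) are needed — without integrality one could take $r$ just barely larger than $r'$ and $\frac{r}{s} > \frac{r'}{s'}$ would fail, and without the upper bound on $r$ the step $s > r$ would break. So the ``hard part,'' such as it is, is simply to remember to invoke those two facts; the rest is a one-line computation.
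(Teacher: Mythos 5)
Your proof is correct and takes essentially the same route as the paper: clear denominators using $s - s' = 1$, then settle both directions with the integrality of $r - r'$ and the range constraints. The only cosmetic difference is that you rearrange to $s(r-r') > r$ and invoke $0 \le r \le s-1$, whereas the paper rearranges to $(r-r')s' > r'$ and invokes $0 \le r' < s'$ (handling the reverse implication by contraposition); both amount to the same one-line computation.
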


\begin{proof}First,
\[ \frac{r}{s} > \frac{r'}{s'} \ \Longleftrightarrow \ r s' > r' s
\ \Longleftrightarrow \ (r-r') s' > r'(s-s') = r'. \] 

We then show $r > r'$ if and only if $(r-r') s' > r'.$
Suppose $r > r',$ we have $r - r' \ge 1.$ So $(r-r')s' \ge s' > r'.$ 
Conversely, suppose $r \le r'.$ Then $r - r' \le 0.$ Thus, $(r-r') s'
\le 0 \le r'.$ 
\end{proof}

By Lemma \ref{lem:sdes-des}, one sees that if $\s = (n, n-1, \dots, 2,
1)$, then for any $\r \in \langle n-1\rangle \times \cdots \times \langle
   0\rangle$, $\s$-descents of $\r$ are the same as
regular descents of $\r.$ Hence, we get the following two corollaries
as special cases of Theorem~\ref{thm:REMinv} and
Corollary~\ref{cor:sn=1}. 

\begin{cor}\label{cor:REMinv-anti} Let $\s = (n, n-1, \dots, 2, 1).$
Then the map $\REM_\s$ give a bijection between $\Par_\s \cap \Z^n$
and inversion sequences of length $n.$  
	
Moreover, the inverse of $\REM_\s$ is given by
	\begin{eqnarray*}
   \REM_\s^{-1}: \langle n-1\rangle \times \cdots \times \langle
   0\rangle &\to& \Par_\s \cap \Z^n \\ 
	\r= (r_1, \dots, r_n) &\mapsto& (\des^{<1}(\r) s_1 + r_1,
        \dots, \des^{<n}(\r) s_n + r_n). 
\end{eqnarray*}
\end{cor}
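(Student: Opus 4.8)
The plan is to derive Corollary~\ref{cor:REMinv-anti} as an immediate specialization of the general machinery already set up, using Lemma~\ref{lem:sdes-des} to convert $\s$-descents into ordinary descents. First I would observe that for $\s = (n, n-1, \dots, 2, 1)$ we have $s_i - s_{i+1} = 1$ for every $i \in \{1, \dots, n-1\}$, so Lemma~\ref{lem:sdes-des} applies with $s = s_i$, $s' = s_{i+1}$, $r = r_i$, $r' = r_{i+1}$: for any $\r \in \langle n-1\rangle \times \cdots \times \langle 0\rangle = \Psi_n$, the index $i$ is an $\s$-descent of $\r$ (i.e.\ $r_i/s_i > r_{i+1}/s_{i+1}$) if and only if $r_i > r_{i+1}$, i.e.\ $i$ is a regular descent of $\r$. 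Consequently $\Des_\s(\r) = \Des(\r)$ and, restricting to indices below $i$, $\des_\s^{<i}(\r) = \des^{<i}(\r)$ for all $i$ and all $\r \in \Psi_n$.

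Next I would recall that, by definition, $\Psi_n = \langle s_1 - 1\rangle \times \cdots \times \langle s_n - 1\rangle = \langle n-1\rangle \times \cdots \times \langle 1\rangle \times \langle 0\rangle$, which is exactly the codomain of the inversion-sequence bijection $I \colon \S_n \to \langle n-1\rangle \times \cdots \times \langle 0\rangle$ cited from \cite[Prop.~1.3.12]{stanleyec1}; hence elements of $\Psi_n$ are precisely the inversion sequences of length $n$. By Lemma~\ref{lem:REMbij}, $\REM_\s$ is a bijection from $\Par_\s \cap \Z^n$ onto $\Psi_n$, which gives the first assertion. For the formula for the inverse, Theorem~\ref{thm:REMinv} states that
\[
\REM_\s^{-1}(\r) = \bigl(\des_\s^{<1}(\r)\, s_1 + r_1,\ \dots,\ \des_\s^{<n}(\r)\, s_n + r_n\bigr),
\]
and substituting the identity $\des_\s^{<i}(\r) = \des^{<i}(\r)$ established in the first step yields exactly the claimed expression with $\des^{<i}$ in place of $\des_\s^{<i}$.

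I do not anticipate a genuine obstacle here: the corollary is a bookkeeping consequence of Lemma~\ref{lem:sdes-des}, Lemma~\ref{lem:REMbij}, and Theorem~\ref{thm:REMinv}. The only point requiring a word of care is confirming that Lemma~\ref{lem:sdes-des} really does apply at \emph{every} consecutive pair of coordinates — this is where the specific choice $\s = (n, n-1, \dots, 1)$ matters, since it is precisely the sequence for which all successive differences $s_i - s_{i+1}$ equal $1$ and the coordinate ranges $\langle s_i - 1\rangle$ telescope down to the inversion-table shape. Everything else is substitution into statements proved earlier in the paper.
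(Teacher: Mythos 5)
Your proposal is correct and follows essentially the same route as the paper: identify $\Psi_n$ with inversion sequences, use Lemma~\ref{lem:sdes-des} coordinate-by-coordinate to see that $\s$-descents coincide with regular descents for $\s=(n,n-1,\dots,1)$, and then specialize Lemma~\ref{lem:REMbij} and Theorem~\ref{thm:REMinv}. The paper treats the corollary exactly as such a special case, so there is nothing to add.
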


\begin{cor}\label{cor:anti}
	If $\s = (n,n-1,\dots,2,1),$ we have that
	\begin{eqnarray*}
	\L^i(\Par_\s) &=& \{ \x \in \Par_\s \cap \Z^n \ |
        \ \des(\REM_\s(\x)) = i\} \\ 
	&=& \{ \REM_\s^{-1}(\r) \ | \ \des(\r) = i, \ \r \in \langle
                n-1\rangle \times \cdots \times \langle 0\rangle \}, 
	\end{eqnarray*}
	and
	\begin{eqnarray*}
	\l^i(\Par_\s) &=& \# \{ \r \in \langle n-1\rangle \times
        \cdots \times \langle 0\rangle \ | \ \des(\r) = i\} \\ 
		& = & \#\,
        \mathrm{inversion\ sequences\ of\ length}\ n\ \mathrm{that\ 
          have}\ i\ \mathrm{descents}. 
	\end{eqnarray*}
\end{cor}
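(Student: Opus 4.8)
The plan is to read off Corollary~\ref{cor:anti} almost immediately from the machinery already in place, using the observation recorded right after Lemma~\ref{lem:sdes-des} that for $\s=(n,n-1,\dots,2,1)$ the $\s$-descents of any $\r\in\langle n-1\rangle\times\cdots\times\langle 0\rangle$ coincide with the ordinary descents of $\r$. First I would note that $\Psi_n$ in this case is exactly $\langle n-1\rangle\times\cdots\times\langle 1\rangle\times\langle 0\rangle$, which is the set of inversion sequences of length $n$ by \cite[Prop.~1.3.12]{stanleyec1}. The sequence $\s$ here has $s_n=1$, so Corollary~\ref{cor:sn=1} applies verbatim and gives the three displayed identities with $\des_\s$ in place of $\des$; replacing $\des_\s$ by $\des$ is then justified by the consequence of Lemma~\ref{lem:sdes-des} just mentioned (applied with consecutive pairs $s_i=n-i+1$, $s_{i+1}=n-i$, whose difference is $1$).

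Concretely, the first line of the corollary is the first line of Corollary~\ref{cor:sn=1} after this substitution; the second line is likewise the second line of Corollary~\ref{cor:sn=1}, noting that $\REM_\s$ and its inverse are unchanged (only the description of which points lie in $\L^i$ via a descent count is being rewritten). For the count $\l^i(\Par_\s)$, the last display of Corollary~\ref{cor:sn=1} gives $\l^i(\Par_\s)=\#\{\r\in\Psi_n\mid\des_\s(\r)=i\}$, which becomes $\#\{\r\in\langle n-1\rangle\times\cdots\times\langle 0\rangle\mid\des(\r)=i\}$; the second equality in that display of the corollary is then just the restatement that elements of $\Psi_n$ are precisely the inversion sequences of length $n$.

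I would also make explicit that $\des_\s^{<i}$ specializes to $\des^{<i}$ for the same reason, so that the formula for $\REM_\s^{-1}$ inherited from Theorem~\ref{thm:REMinv} takes the stated form in Corollary~\ref{cor:REMinv-anti}, and Corollary~\ref{cor:anti} is then purely a matter of translating Corollary~\ref{cor:sn=1} through the dictionary $\des_\s\leftrightarrow\des$, $\Psi_n\leftrightarrow$ inversion sequences. There is essentially no obstacle: the only point needing care is checking that Lemma~\ref{lem:sdes-des} is being invoked with the correct pairs of consecutive entries of $\s=(n,n-1,\dots,1)$ (each adjacent pair differs by exactly $1$, so the hypothesis $s-s'=1$ holds for every $i$), and that this makes the equivalence hold \emph{coordinatewise}, hence $\Des_\s(\r)=\Des(\r)$ as sets for every $\r$ in the relevant product. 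Everything else is a direct citation of the already-proved Corollary~\ref{cor:sn=1}.
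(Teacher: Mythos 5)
Your proposal is correct and follows essentially the same route as the paper: the paper also deduces Corollary~\ref{cor:anti} directly from Corollary~\ref{cor:sn=1} (applicable since $s_n=1$) together with the observation, via Lemma~\ref{lem:sdes-des} applied to each adjacent pair of $\s=(n,n-1,\dots,2,1)$, that $\s$-descents coincide with ordinary descents on $\langle n-1\rangle\times\cdots\times\langle 0\rangle$, and then identifies $\Psi_n$ with the inversion sequences of length $n$. No gaps; your extra remark about $\des_\s^{<i}$ specializing to $\des^{<i}$ matches the paper's parallel Corollary~\ref{cor:REMinv-anti}.
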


\begin{proof}[Proof of Theorem \ref{thm:ehrh-anti}]
	The theorem follows from Lemma \ref{lem:invseq}, formula \eqref{equ:delta1}, and Corollary \ref{cor:anti}.
\end{proof}

\section{The reversal of the sequence $\s$}\label{sec:reversal} 

In this section, we assume that $\s = (s_1, \dots, s_n)$ is a sequence
of positive integers and $\u = (u_1, \dots, u_n) =(s_n, \dots, s_1)$
is the reverse of $\s.$
Recall we associate the following set to $\s$:
\begin{equation*}
	\Psi_n  =  \langle s_1-1\rangle \times \cdots \times \langle s_n-1 \rangle.
  \end{equation*}
Similarly, we associate a set to $\u$: 
\begin{equation*}
  \bar{\Psi}_n  = \langle u_1-1\rangle \times \cdots \times \langle u_n-1 \rangle = \langle s_n-1\rangle \times \cdots \times \langle s_1-1 \rangle. 
\end{equation*}
 As usual, we let  
\[ \s^* = (s_1, \dots, s_n, s_{n+1}=1), \text{ and } \u^* = (u_1,
\dots, u_n, u_{n+1}=1).\]

The following lemma suggests a question (Question \ref{ques:bij}),
which is the primary motivation for this section.  
\begin{lem}\label{lem:ehrh-same}
The Ehrhart polynomial of the $\s$-lecture hall polytope $P_\s$ is the
same as the Ehrhart polynomial of the $\u$-lecture hall polytope
$P_\u$; or equivalently, $P_\s$ and $P_\u$ have the same
$\delta$-vectors.  
\end{lem}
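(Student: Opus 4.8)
The plan is to show that $P_\s$ and $P_\u$ have the same Ehrhart polynomial by exhibiting a lattice-preserving affine (in fact linear, up to a unimodular coordinate change) isomorphism between the two polytopes, or, failing a direct geometric map, by comparing their $\delta$-vectors via the combinatorial description of $\l^i(\Par_{\s^*})$ already developed. The cleanest route I would try first is the geometric one. Recall from the excerpt that $P_\s$ is the simplex $\{0 \le x_1/s_1 \le \cdots \le x_n/s_n \le 1\}$, and that after the substitution $y_i = x_i/s_i$ it becomes the \emph{order simplex} $\{0 \le y_1 \le \cdots \le y_n \le 1\}$, while $P_\u$ becomes $\{0 \le z_1 \le \cdots \le z_n \le 1\}$ under $z_i = x_i/u_i = x_i/s_{n+1-i}$. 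The reversal $y_i \mapsto y'_{n+1-i} = 1 - y_i$ is an affine involution carrying the order simplex to itself and reversing the roles of the coordinates; composing these changes of variable, one gets a candidate map $\R^n \to \R^n$, $x_i \mapsto s_{n+1-i}\bigl(1 - x_{n+1-i}/s_{n+1-i}\bigr) = s_{n+1-i} - x_{n+1-i}$, i.e. the affine map $\phi(\x) = (s_n - x_n, s_{n-1} - x_{n-1}, \dots, s_1 - x_1)$.

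The key steps are then: first verify that $\phi$ maps $P_\s$ bijectively onto $P_\u$ — this is a direct check that $\phi$ sends the defining inequalities of $P_\s$ to those of $P_\u$ (or, equivalently, that it permutes the vertex set $\{(0,\dots,0), (0,\dots,0,s_n), \dots, (s_1,\dots,s_n)\}$ of $P_\s$ onto the analogous vertex set of $P_\u$). Second, observe that $\phi$ is a unimodular affine transformation of $\Z^n$: it is composition of the coordinate-reversing permutation (a signed permutation matrix, determinant $\pm 1$) with the integer translation $\x \mapsto (s_1,\dots,s_n) - \x$-type shift, hence $\phi(\Z^n) = \Z^n$ and $\phi(t P_\s) = t P_\u$ for every dilation $t$ after accounting for the translation scaling correctly. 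Here one must be a little careful: $\phi$ is not linear, so $\phi(tP_\s)$ is a translate of $tP_\u$ rather than $tP_\u$ itself; the clean fix is to note that the linear part of $\phi$ is unimodular and Ehrhart polynomials are translation-invariant under integer translations, or better, to use that $t P_\s$ and $tP_\u$ have lattice points in bijection via the affine unimodular map $\x \mapsto t\cdot(s_n,\dots,s_1) - (\text{reverse of }\x)$ which sends $tP_\s \cap \Z^n$ to $tP_\u \cap \Z^n$. Third, conclude $i(P_\s,t) = i(P_\u,t)$ for all $t \ge 0$, hence equality of Ehrhart polynomials, and by the relation $\sum_t i(P,t)z^t = \delta_P(z)/(1-z)^{n+1}$ the $\delta$-vectors agree as well.

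The main obstacle I anticipate is purely bookkeeping: getting the index reversal and the "$s_i - x_i$" shift to line up exactly so that the map genuinely lands in $P_\u$ with $\u = (s_n,\dots,s_1)$ and genuinely preserves the integer lattice on every dilate, rather than only up to a non-integral translation. If the direct affine map proves awkward to verify, the fallback is entirely combinatorial: by Lemma~\ref{lem:cnnt} it suffices to show $\l^i(\Par_{\s^*}) = \l^i(\Par_{\u^*})$ for all $i$, and by Corollary~\ref{cor:s*} this amounts to showing that $\#\{\r \in \Psi_n \times \langle 0\rangle : \des_{\s^*}(\r) = i\}$ equals $\#\{\r \in \bar\Psi_n \times \langle 0\rangle : \des_{\u^*}(\r) = i\}$; one would build an explicit bijection $\r \mapsto \r'$ on these sets (reversing coordinates and replacing each $r_j$ by $s_j - 1 - r_j$, roughly) that turns $\s^*$-descents into $\u^*$-descents, using that $r_i/s_i > r_{i+1}/s_{i+1}$ iff $(s_{i+1}-1-r_{i+1})/s_{i+1} > (s_i-1-r_i)/s_i$ when neither complement is forced by the boundary. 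This is in fact essentially the content that Section~\ref{sec:reversal} promises to develop, so I would present the short geometric argument as the proof of the lemma and defer the bijective refinement to the later part of the section.
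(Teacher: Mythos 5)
Your proposal is correct and follows essentially the same route as the paper: the map $\phi(\x) = (s_n - x_n,\dots,s_1 - x_1)$ you construct is exactly the paper's unimodular affine transformation $\x \mapsto \rev(\s - \x)$ from $P_\s$ to $P_\u$, and your extra care about dilates (using $\x \mapsto \rev(t\s - \x)$ on $tP_\s$) is just an explicit version of the standard fact that integer-affine unimodular maps preserve Ehrhart polynomials.
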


\begin{rem}
Note that Theorem \ref{thm:ehrh-anti} and Lemma
\ref{lem:ehrh-same} recover the result on the Ehrhart polynomial of
the lecture hall polytope $P_\s$, where $\s = (1, 2, \dots, n),$ given
in \cite[Corollary 2(i)]{CorLeeSav2005} and \cite[Corollary 1]{SavSch2012}. However, we want to describe a bijection from
the lattice points in the fundamental parallelepiped associated to
$P_{(1,2,\dots,n)}$ to inversion sequences. We will give such a
bijection in Proposition \ref{prop:1-n} in the next section.  
\end{rem}

The proof of Lemma \ref{lem:ehrh-same} is straightforward and is also proved in \cite{CorLeeSav2005}. We defer it to the end of the section. 

The following result follows immediately from
Theorem~\ref{thm:delta-des} and Lemma \ref{lem:ehrh-same}. 

\begin{cor}
For each $i: 0 \le i \le n,$ the two sets
\begin{equation}\label{equ:sset} 
 \{ \r \in \Psi_n\times \langle 0 \rangle  \ | \ \des_{\s^*}(\r) = i \} 
\end{equation}
and
\begin{equation}\label{equ:uset} 
\{ \r \in \bar{\Psi}_n\times \langle 0 \rangle  \ | \ \des_{\u^*}(\r) = i
\}  \end{equation}
have the same cardinality.
\end{cor}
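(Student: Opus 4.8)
The plan is to read off the cardinalities of the two sets from Theorem~\ref{thm:delta-des}, and then identify the resulting $\delta$-vector entries using Lemma~\ref{lem:ehrh-same}.

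First I would apply equation~\eqref{equ:delta-des} of Theorem~\ref{thm:delta-des} directly to $\s$: it says that the set in \eqref{equ:sset} has cardinality $\delta_{P_{\s^*},i}$, which coincides with $\delta_{P_\s,i}$ (this is exactly how the theorem phrases the $\delta$-vector of $P_\s$; alternatively it follows by applying \eqref{equ:same} and \eqref{equ:delta} of Lemma~\ref{lem:cnnt} to $\s^*$, whose last entry is $1$, and then \eqref{equ:delta} to $\s$, giving $\delta_{P_{\s^*},i}=\l^i(\Par_{(\s^*)^*})=\l^i(\Par_{\s^*})=\delta_{P_\s,i}$). Next I would apply the same theorem with $\s$ replaced by its reversal $\u=(s_n,\dots,s_1)$: one only needs to note that the set playing the role of ``$\Psi_n$'' for $\u$ is exactly $\bar{\Psi}_n$ and that the sequence to feed into the theorem is $\u^*=(s_n,\dots,s_1,1)$, after which \eqref{equ:delta-des} for $\u$ says the set in \eqref{equ:uset} has cardinality $\delta_{P_{\u^*},i}=\delta_{P_\u,i}$.

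It then suffices to observe that $\delta_{P_\s,i}=\delta_{P_\u,i}$ for every $i$, which is precisely the assertion of Lemma~\ref{lem:ehrh-same}. Chaining the identities shows that the set in \eqref{equ:sset} and the set in \eqref{equ:uset} both have cardinality $\delta_{P_\s,i}=\delta_{P_\u,i}$, as claimed. There is no genuine obstacle here --- the whole argument is substitution into already-established results; the only point requiring care is the bookkeeping of which sequence ($\s$, $\u$, $\s^*$, or $\u^*$) each index set and each descent statistic belongs to, so that one feeds Theorem~\ref{thm:delta-des} the sequence $\u^*$ rather than the reversal of $\s^*$ (which is the different sequence $(1,s_n,\dots,s_1)$).
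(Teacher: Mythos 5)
Your argument is correct and is exactly the paper's proof: the paper deduces this corollary immediately from Theorem~\ref{thm:delta-des} (applied to $\s$ and to its reversal $\u$) together with Lemma~\ref{lem:ehrh-same}, which is the chain of identities you spell out. Your extra care in identifying $\delta_{P_{\s^*},i}$ with $\delta_{P_\s,i}$ via Lemma~\ref{lem:cnnt}, and in feeding the theorem $\u^*$ rather than the reversal of $\s^*$, just makes explicit the bookkeeping the paper leaves implicit.
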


One natural question arises: can we give a simple bijection from
$\Psi_n \times
\langle 0 \rangle$ to $\bar{\Psi}_n \times \langle 0 \rangle$ such that
it induces a 
bijection from the set \eqref{equ:sset} to the set \eqref{equ:uset}
for each $i.$ Note that the last coordinates of any vector in
$\Psi_n\times \langle 
0 \rangle$ or $\bar{\Psi}_n\times \langle 0 \rangle$ is $0,$ which does
not carry any information. For convenience, we drop the last
coordinate when describe the bijection.  
Hence, we rephrase the question as follows:

\begin{ques}\label{ques:bij}
Can we give a simple bijection $b$ from $\Psi_n$ to $\bar{\Psi}_n$ such that
the map $(\r, 0) \mapsto (b(\r),0)$ induces a bijection from the set
\eqref{equ:sset} to the set \eqref{equ:uset} for each $i$?  
\end{ques}

Before discussing Question \ref{ques:bij}, we define a simple
function and fix some notation related to $\s$ and $\u$.

\begin{defn}
For any sequence/vector $\r,$ we denote by $\rev(\r)$ the reverse of
$\r.$ 
\end{defn}

\begin{notn}\label{notn:tstu}
In addition to the usual notation $\s^*$ and $\u^*$, we also define
the following vectors related to $\s$ and $\u:$ 
 \beas \ts & := & (s_0=1, s_1, \dots, s_n, s_{n+1}=1)\\
   \tu & := & (u_0=1, u_1, \dots, u_n, u_{n+1}=1). \eeas
Hence, $\tu$ is the reverse of $\ts.$
\end{notn}

In order to describe a bijection asked by Question \ref{ques:bij}, we
recall the bijection $\Phi_\s^\q$ defined in \eqref{equ:phi}. The map
$\Phi_\s^\0$ is important for this section, so we repeat its
definition here. 
\begin{defn}\label{defn:phi0}
Let $\s = (s_1, \dots, s_n)$ be a sequence of positive integers and
$\0 = (0, \dots, 0).$ Define 
\begin{eqnarray*}
 \Phi_{\s}^{\0}: \Psi_n &\to& \Psi_n \\  
	\r= (r_1, \dots, r_n) &\mapsto& \z=(z_1, \dots, z_n), \nonumber
\end{eqnarray*}
where \[r_i + z_i = 0 \ \mod s_i.\]
For convenience, we abbreviate $\Phi_\s^\0$ to $\Phi_\s.$
\end{defn}

The following theorem is the main result of this section, which
provides a desired bijection to Question \ref{ques:bij}. 

\begin{thm}\label{thm:rev}
For any $\r \in \Psi_n$, we have  
\begin{equation}\label{equ:sdes-udes1}
	\des_{\s^*}(\r, 0) = \des_{\u^*}(\rev(\Phi_\s(\r)), 0).
\end{equation}
\end{thm}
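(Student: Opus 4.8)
The plan is to unwind both sides of \eqref{equ:sdes-udes1} into a statement about all $n+1$ ``gaps'' in the padded sequences $\ts$ and $\tu$, and then match those gaps up under reversal. Concretely, write $\r = (r_1,\dots,r_n)$ and set $r_0 := 0$, $r_{n+1} := 0$ so that $(\r,0) = (r_1,\dots,r_n,0)$ together with the convention $r_0=0$ records a sequence indexed $0,\dots,n+1$. With the convention $s_0 = s_{n+1} = 1$ coming from $\ts$, one has $r_0/s_0 = 0 = r_{n+1}/s_{n+1}$, and the quantity $\des_{\s^*}(\r,0)$ counts those $i \in \{1,\dots,n\}$ with $r_i/s_i > r_{i+1}/s_{i+1}$ (using $s_{n+1}=1$, $r_{n+1}=0$). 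It is cleaner, though, to also observe that index $0$ is never an $\s$-descent since $r_0/s_0 = 0$ is minimal; so in fact $\des_{\s^*}(\r,0) = \#\{\, i \in \{0,1,\dots,n\} \ |\ r_i/s_i > r_{i+1}/s_{i+1}\,\}$, i.e.\ we may freely include the harmless index $0$. The point of doing so is symmetry: now we are counting strict descents of the full $(n+2)$-term sequence $\bigl(r_0/s_0,\ r_1/s_1,\ \dots,\ r_n/s_n,\ r_{n+1}/s_{n+1}\bigr)$ with both ends pinned at $0$.

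Next I would analyze the map $\Phi_\s$. By Definition \ref{defn:phi0}, $\z = \Phi_\s(\r)$ has $z_i \equiv -r_i \pmod{s_i}$ with $z_i \in \langle s_i-1\rangle$; explicitly $z_i = 0$ if $r_i = 0$ and $z_i = s_i - r_i$ if $r_i \ne 0$. In terms of fractions, $z_i/s_i = 0$ if $r_i = 0$ and $z_i/s_i = 1 - r_i/s_i$ otherwise. The key elementary claim I would isolate and prove is: for positive integers $s,s'$, $r \in \langle s-1\rangle$, $r' \in \langle s'-1\rangle$, with $z = \Phi$-image of $r$ (i.e.\ $z/s = 0$ or $1-r/s$) and similarly $z'$,
\[
  \frac{r}{s} > \frac{r'}{s'} \quad\Longleftrightarrow\quad \frac{z'}{s'} > \frac{z}{s}.
\]
This is a short case check on whether each of $r,r'$ is zero: if both are nonzero it is the identity $r/s > r'/s' \iff 1-r'/s' > 1-r/s$; if $r=0$ then the left side is false (since $r/s=0$ is the minimum) and the right side is $z'/s' > 0$... which can be true — so this naive claim is wrong at the boundary, and that is exactly the subtlety I expect to be the main obstacle. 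The fix is that the boundary terms $r_0 = r_{n+1} = 0$ are also fed through $\Phi$ trivially ($z_0 = z_{n+1} = 0$), and one must track strict versus non-strict inequalities carefully. So the correct intermediate statement compares the sequence $(r_i/s_i)_{i=0}^{n+1}$ with the sequence $(z_i/s_i)_{i=0}^{n+1}$: these are \emph{not} simply ``$1-$'' reverses of each other because of the zeros, but I claim the number of strict descents of the first equals the number of strict descents read backwards of the second, i.e.\ $\des\bigl((r_i/s_i)_i\bigr) = \#\{\text{strict ascents of } (z_i/s_i)_i\}$, and then reversing the index order turns strict ascents of $(z_i/s_i)_i$ into strict descents of $(z_{n+1-i}/s_{n+1-i})_i = (\rev(\Phi_\s(\r)))$ padded appropriately — which, since $\tu = \rev(\ts)$, is precisely $\des_{\u^*}(\rev(\Phi_\s(\r)),0)$.

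So the steps, in order, are: (1) rewrite $\des_{\s^*}(\r,0)$ as a strict-descent count of the $(n+2)$-term fraction sequence with both ends $0$, legitimately including the always-non-descent index $0$; (2) do the same on the right-hand side, using $\tu = \rev(\ts)$ to identify $\des_{\u^*}(\rev(\Phi_\s(\r)),0)$ with a strict-descent count of the reversed fraction sequence of $\Phi_\s(\r)$, again with both ends $0$; (3) prove the complementation lemma relating strict descents of $(r_i/s_i)_i$ to strict ascents of $(z_i/s_i)_i$, handling the zero entries as the genuinely delicate case — the cleanest route is probably to show directly that $i$ is a strict $\s^*$-descent of $(\r,0)$ iff $i$ is \emph{not} a weak $\s^*$-descent (i.e.\ is a strict ascent or an equality) of... no: rather, to show that the involution $r \leftrightarrow z$ on each coordinate, combined with reversal of the index set $\{0,\dots,n+1\}$, is a bijection from the descent positions of one sequence to the descent positions of the other, by checking the four cases $r_i=0$ or not, $r_{i+1}=0$ or not at each consecutive pair, and using that the sequences are pinned at $0$ at both ends so that a ``descent into a zero'' on one side corresponds to an ``ascent out of a zero'' on the other. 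I expect step (3), specifically the bookkeeping at coordinates where $r_i = 0$, to be where all the real work lies; everything else is a change of notation driven by $\tu = \rev(\ts)$.
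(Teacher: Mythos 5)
Your proposal is correct and follows essentially the same route as the paper: pad the sequence with zeros at both ends (the paper's $\ts$ and $\tu=\rev(\ts)$), show that the complementation $\Phi_\s$ turns descents into ascents via exactly the four-case check on whether $r_i$, $r_{i+1}$ vanish, with the two mixed cases balancing out because both ends are pinned at $0$, and then convert ascents into descents of the reversed sequence. The paper packages this as the separate lemma giving $\des_\ts(\r)=\asc_\ts(\Phi_\ts(\r))=\des_\tu(\rev(\Phi_\ts(\r)))$ and counts the occurrences of the two mixed cases rather than exhibiting a position-wise bijection, but the content of your step (3) is the same argument.
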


By \eqref{equ:sdes-udes1}, one sees that the map $(\rev \circ\,
\Phi_\s)$ is an answer to Question \ref{ques:bij}. 
If we put all the maps together, we have the following diagram,
denoting by $\pi$ the map that drops the last coordinate of a vector. 
\[\begin{CD}
\Par_{\s^*} \cap \Z^{n+1} @>{\REM_{\s^*}}>> \Psi_n\times \langle 0 \rangle
@>{\pi}>> \Psi_n \\ 
	@. @. @VV{\Phi_\s}V\\
	@. @.\Psi_n\\
	@. @. @VV{\rev}V\\
	\Par_{\u^*} \cap \Z^{n+1} @>{\REM_{\u^*}}>> \bar{\Psi}_n\times \langle 0 \rangle
@>{\pi}>> \bar{\Psi}_n. 
\end{CD}\]
Note that all the maps in the above diagram are bijections. Going
around the diagram from $\Par_{\s^*} \cap \Z^{n+1}$ to $\Par_{\u^*}
\cap \Z^{n+1}$, we obtain a bijection $\Gamma: \Par_{\s^*} \cap
\Z^{n+1} \to \Par_{\u^*} \cap \Z^{n+1}.$ By Theorem \ref{thm:rev} and
Corollary \ref{cor:s*}, we have that $\Gamma$ induces a bijection from
$\L^i(\Par_{\s^*})$ to $\L^i(\Par_{\u^*})$ for each $i.$ 

We can also simplify the above diagram slightly. One checks that
\[ \Phi_\s \circ \pi \circ \REM_{\s^*} = \pi \circ \Phi_{\s^*} \circ
 \REM_{\s*} = \pi \circ \bREM_{\s^*},\] 
where $\bREM_{\s^*}$ is defined in Definition \ref{defn:REMs}. Then we
redraw the diagram: 
	\[\begin{CD}
\Par_{\s^*} \cap \Z^{n+1} @>{\bREM_{\s^*}}>> \Psi_n\times \langle 0
\rangle @>{\pi}>> \Psi_n\\ @. @. @VV{\rev}V\\
\Par_{\u^*} \cap \Z^{n+1} @>{\REM_{\u^*}}>> \bar{\Psi}_n\times \langle 0 \rangle
@>{\pi}>> \bar{\Psi}_n. 
\end{CD}\]
This illustrates that if we use $\bREM_{\s^*}$ for $\Par_{\s^*} \cap
\Z^{n+1}$ and $\REM_{\u^*}$ for $\Par_{\u^*} \cap \Z^{n+1},$ their
image sets have very simple correspondence. 

\begin{cor}
Let $\r \in \Psi_n$. Then for each $i,$ 
\[ \bREM_{\s^*}^{-1}(\r,0) \in \L^i(\Par_{\s^*}) \ \Longleftrightarrow
\ \REM_{\u^*}^{-1}(\rev(\r),0) \in \L^i(\Par_{\u^*}).\]  
\end{cor}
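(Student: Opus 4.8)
The plan is to read the statement off the second (simplified) commutative diagram above, i.e.\ to combine three ingredients already at hand: the factorization $\bREM_{\s^*}=\Phi_{\s^*}^{\0}\circ\REM_{\s^*}$ from the proof of Lemma~\ref{lem:morebij}, Corollary~\ref{cor:s*} (applied once to $\s$ and once to $\u$), and Theorem~\ref{thm:rev}. The only extra observation needed is that $\Phi_{\s}^{\0}$ (and likewise $\Phi_{\s^*}^{\0}$) is an involution: the defining congruence $r_i+z_i\equiv 0\pmod{s_i}$ is symmetric in $r_i$ and $z_i$, so applying the map twice returns $\r$.

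First I would compute $\REM_{\s^*}\bigl(\bREM_{\s^*}^{-1}(\r,0)\bigr)$. Since $\bREM_{\s^*}=\Phi_{\s^*}^{\0}\circ\REM_{\s^*}$ and $\Phi_{\s^*}^{\0}$ is an involution, we have $\bREM_{\s^*}^{-1}=\REM_{\s^*}^{-1}\circ\Phi_{\s^*}^{\0}$, hence $\REM_{\s^*}\bigl(\bREM_{\s^*}^{-1}(\r,0)\bigr)=\Phi_{\s^*}^{\0}(\r,0)=(\Phi_\s(\r),0)$, the last equality holding because the final coordinate, with modulus $s_{n+1}=1$, is forced to be $0$. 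By Corollary~\ref{cor:s*}, membership $\bREM_{\s^*}^{-1}(\r,0)\in\L^i(\Par_{\s^*})$ is therefore equivalent to $\des_{\s^*}(\Phi_\s(\r),0)=i$. Applying Theorem~\ref{thm:rev} with $\r$ replaced by $\Phi_\s(\r)\in\Psi_n$ and using $\Phi_\s\circ\Phi_\s=\mathrm{id}$ gives $\des_{\s^*}(\Phi_\s(\r),0)=\des_{\u^*}(\rev(\r),0)$.

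On the other side, $\rev(\r)\in\bar{\Psi}_n$ (reversing a point of $\Psi_n$ reverses the list of moduli), so $(\rev(\r),0)$ lies in the domain $\bar{\Psi}_n\times\langle 0\rangle$ of $\REM_{\u^*}^{-1}$, and Corollary~\ref{cor:s*} for $\u$ says that $\REM_{\u^*}^{-1}(\rev(\r),0)\in\L^i(\Par_{\u^*})$ is equivalent to $\des_{\u^*}(\rev(\r),0)=i$. Chaining the three equivalences, both sides of the asserted biconditional are equivalent to $\des_{\u^*}(\rev(\r),0)=i$, which proves the corollary; equivalently, this is precisely the commutativity of the simplified diagram restricted to lattice points with a prescribed last coordinate, together with the fact noted just above that the wrap-around bijection $\Gamma$ carries $\L^i(\Par_{\s^*})$ onto $\L^i(\Par_{\u^*})$. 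I do not expect any real obstacle here: the entire argument is a formal chase, and the only place to be careful is the bookkeeping of the trailing $0$-coordinate when moving among $\Par_{\s^*}\cap\Z^{n+1}$, $\Psi_n\times\langle 0\rangle$, and $\Psi_n$.
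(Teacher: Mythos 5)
Your argument is correct and follows essentially the same route the paper intends: the factorization $\bREM_{\s^*}=\Phi_{\s^*}^{\0}\circ\REM_{\s^*}$ (equivalently the identity $\Phi_\s\circ\pi\circ\REM_{\s^*}=\pi\circ\bREM_{\s^*}$ behind the simplified diagram), Corollary~\ref{cor:s*} applied to both $\s$ and $\u$, and Theorem~\ref{thm:rev}, with the involutivity of $\Phi_\s$ handling the inverse. The bookkeeping of the trailing $0$-coordinate is done correctly, so nothing is missing.
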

One sees that the bijection $\bREM_{\s^*}$ is useful
sometimes. Despite this, in general we do not have similar results
for $\bREM_{\s*}$ as those for $\REM_\s$ or $\REM_{\s^*}$ described in
Theorem~\ref{thm:REMinv} and Corollary~\ref{cor:s*}. However, we will
show in the next section that $\bREM_{\s^*}$ has a comparable result
for the special cases when $s_1=1.$ 

We need a preliminary lemma before proving Theorem \ref{thm:rev}. The statement of this lemma involves {\it ascents}.

\begin{defn}

Let $\s=(s_1,\dots,s_n)$ be a sequence of positive integers and $\r=(r_1,\dots,r_n) \in \N^n.$ We say that $i$ is an {\it $\s$-ascent of
  $\r$} if $\displaystyle \frac{r_i}{s_i} < \frac{r_{i+1}}{s_{i+1}}.$  
	
We denote by $\Asc_\s(\r)$ the set of $\s$-ascents of $\r,$ and let
$\asc_\s(\r) = \# \Asc_\s(\r)$ be its cardinality. 

When $\s=(1,1,\dots,1),$ we get the {\it (regular) ascents}. We use notation $\Asc(\r)$ and $\asc(\r)$ for this case.
\end{defn}

\begin{lem}
Recall that $\ts$ and $\tu$ are defined in Notation \ref{notn:tstu}.  
For any $\r \in \langle 0 \rangle \times \Psi_n \times \langle 0
\rangle,$ we have   
\begin{equation}\label{equ:sdes-udes}
	\des_\ts(\r) = \asc_\ts(\Phi_{\ts}(\r)) = \des_\tu(\rev(\Phi_\ts(\r))).
\end{equation}
\end{lem}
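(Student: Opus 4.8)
The plan is to prove the two equalities in \eqref{equ:sdes-udes} separately. Throughout, write $\r = (r_0, r_1, \dots, r_n, r_{n+1})$ with $r_0 = r_{n+1} = 0$, and let $\z = \Phi_{\ts}(\r) = (z_0, z_1, \dots, z_n, z_{n+1})$, so that $z_i$ is the residue of $-r_i$ modulo $\tilde s_i$; note $z_0 = z_{n+1} = 0$ since $\tilde s_0 = \tilde s_{n+1} = 1$. The second equality, $\asc_\ts(\z) = \des_\tu(\rev(\z))$, should be essentially formal: reversing a sequence of length $n+2$ turns the index $i$ (a comparison between positions $i$ and $i+1$) into the index $n+1-i$, and simultaneously $\tu = \rev(\ts)$, so an $\ts$-ascent of $\z$ at position $i$ becomes a $\tu$-descent of $\rev(\z)$ at position $n+1-i$. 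I would state this as a short lemma-style observation: for any sequence $\bm t$ and any $\r$, $i \in \Asc_{\bm t}(\r) \iff n+1-i \in \Des_{\rev(\bm t)}(\rev(\r))$, which gives a bijection between the two descent/ascent sets and hence equality of cardinalities.

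The first equality, $\des_\ts(\r) = \asc_\ts(\z)$, is the substantive one. Here I would argue position-by-position: I claim that for each $i \in \{0, 1, \dots, n\}$,
\[
\frac{r_i}{\tilde s_i} > \frac{r_{i+1}}{\tilde s_{i+1}} \quad\Longleftrightarrow\quad \frac{z_i}{\tilde s_i} < \frac{z_{i+1}}{\tilde s_{i+1}}.
\]
The key point is that $z_i/\tilde s_i = -r_i/\tilde s_i \bmod 1$, i.e. if $r_i = 0$ then $z_i = 0$ and $z_i/\tilde s_i = 0 = r_i/\tilde s_i$, while if $r_i \ne 0$ then $z_i/\tilde s_i = 1 - r_i/\tilde s_i$. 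I would split into cases according to which of $r_i$, $r_{i+1}$ are zero. When both are nonzero, $r_i/\tilde s_i > r_{i+1}/\tilde s_{i+1} \iff 1 - r_i/\tilde s_i < 1 - r_{i+1}/\tilde s_{i+1} \iff z_i/\tilde s_i < z_{i+1}/\tilde s_{i+1}$, immediately. The cases where one or both of $r_i, r_{i+1}$ vanish need care, because then the relation between $z$ and $r$ is not simply $x \mapsto 1-x$; but since $r_j/\tilde s_j \in [0,1)$ always, one checks directly that $r_i/\tilde s_i > r_{i+1}/\tilde s_{i+1}$ forces $r_i \ne 0$ (so $z_i/\tilde s_i = 1 - r_i/\tilde s_i$) and $r_{i+1}$ may be anything, and a short case analysis pins down both sides. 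This establishes a bijection $\Des_\ts(\r) \to \Asc_\ts(\z)$ that is the identity on indices, hence $\des_\ts(\r) = \asc_\ts(\z)$.

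The main obstacle I anticipate is precisely the boundary bookkeeping in this position-wise comparison: the map $r_j/\tilde s_j \mapsto z_j/\tilde s_j$ is $x \mapsto 1-x$ on $(0,1)$ but fixes $0$, so it is not globally order-reversing on $[0,1)$, and one must verify that the edge cases $r_i = 0$ or $r_{i+1} = 0$ never produce a spurious mismatch. This is where the hypothesis $\r \in \langle 0\rangle \times \Psi_n \times \langle 0\rangle$ (ensuring the padded entries $r_0 = r_{n+1} = 0$ and correspondingly $z_0 = z_{n+1} = 0$) does real work, and I would be careful to check the equality holds at the extreme indices $i = 0$ and $i = n$ as well. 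Once the position-wise equivalence is in hand, both claimed equalities follow by summing (counting) over $i$, completing the proof of \eqref{equ:sdes-udes}.
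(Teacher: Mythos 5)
There is a genuine gap in the first (substantive) equality. Your claimed position-by-position equivalence
\[
\frac{r_i}{\tilde s_i} > \frac{r_{i+1}}{\tilde s_{i+1}} \ \Longleftrightarrow\ \frac{z_i}{\tilde s_i} < \frac{z_{i+1}}{\tilde s_{i+1}}
\]
is false, and no index-preserving bijection $\Des_\ts(\r) \to \Asc_\ts(\Phi_\ts(\r))$ exists in general. The failure is exactly in the mixed cases you wave at: if $r_i \neq 0$ and $r_{i+1} = 0$, then $i$ is a descent of $\r$ (since $r_i/\tilde s_i > 0$), but $z_i = \tilde s_i - r_i > 0$ while $z_{i+1} = 0$, so $i$ is \emph{not} an ascent of $\z$; symmetrically, if $r_i = 0$ and $r_{i+1} \neq 0$, then $i$ is not a descent of $\r$ but \emph{is} an ascent of $\z$. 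A minimal counterexample: $\ts = (1,2,1)$, $\r = (0,1,0)$, so $\z = \Phi_\ts(\r) = (0,1,0)$; here $\Des_\ts(\r) = \{1\}$ but $\Asc_\ts(\z) = \{0\}$. The cardinalities agree, the index sets do not, so "summing over $i$" as you propose does not go through.

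The repair is a counting (cancellation) argument rather than a positionwise one, and this is also where the hypothesis $r_0 = r_{n+1} = 0$ actually does its work --- not at the extreme indices, as you suggest, but globally. Classify each $i \in \{0,\dots,n\}$ by whether $r_i$ and $r_{i+1}$ vanish: when both are nonzero the biconditional does hold (your $x \mapsto 1-x$ computation); when both are zero neither a descent of $\r$ nor an ascent of $\z$ occurs; and the two mixed cases produce, respectively, a descent-without-ascent and an ascent-without-descent. Because the sequence $r_0, r_1, \dots, r_{n+1}$ starts and ends at $0$, the number of nonzero-to-zero transitions equals the number of zero-to-nonzero transitions, so the two mixed cases occur equally often and the total counts coincide: $\des_\ts(\r) = \asc_\ts(\z)$. (This is the paper's argument.) Your treatment of the second equality, $\asc_\ts(\z) = \des_\tu(\rev(\z))$ via $i \mapsto n+1-i$ and $\tu = \rev(\ts)$, is correct and matches the paper's (stated there as trivial).
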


\begin{proof} Note apply Definition \ref{defn:phi0} to $\ts,$ we have
\begin{eqnarray*}
 \Phi_{\ts}: \langle 0\rangle \times \Psi_n \times \langle 0 \rangle
 &\to& \langle 
 0\rangle \times \Psi_n \times \langle 0 \rangle \\  
	\r= (r_0=0, r_1, \dots, r_n, r_{n+1}=0) &\mapsto& \z=(z_0=0,
        z_1, \dots, z_n, z_{n+1}=0), \nonumber 
\end{eqnarray*}
where \[r_i + z_i = 0 \ \mod s_i.\]

Let $\r = (r_0, r_1, \dots, r_n, r_{n+1}) \in \langle 0\rangle \times
\Psi_n \times
\langle 0 \rangle$ and $\z = (z_0, z_1, \dots, z_n, z_{n+1}) =
\Phi_{\ts}(\r).$ 
	By the definition of $\Phi_{\ts},$ we have that 
	\begin{equation}\label{equ:ri2zi}
		z_i = \begin{cases} s_i - r_i, & \text{if $r_i \neq 0$}; \\
			0, & \text{if $r_i = 0$}.\end{cases}
	\end{equation}
One can verify that the following four statements are true for $i: 0
\le i \le n,$ by using \eqref{equ:ri2zi}. 
	\begin{ilist}
	\itm Suppose $r_i \neq 0$ and $r_{i+1} \neq 0.$ Then $i$ is an
        $\ts$-descent of $\r$ if and only if $i$ is an $\ts$-ascent of
        $\z.$ 
	\itm Suppose $r_i \neq 0$ and $r_{i+1} = 0.$ Then $i$ is an
        $\ts$-descent of $\r$ and $i$ is not an $\ts$-ascent of $\z.$ 
	\itm Suppose $r_i = 0$ and $r_{i+1} \neq 0.$ Then $i$ is not
        an $\ts$-descent of $\r$ and $i$ is an $\ts$-ascent of $\z.$ 
	\itm Suppose $r_i = 0$ and $r_{i+1} = 0.$ Then $i$ is not an
        $\ts$-descent of $\r$ and $i$ is not an $\ts$-ascent of $\z.$ 
	\end{ilist}
However, since $r_0=r_{n+1}=0,$ we see that the number of occurrences
of situation (ii) and the number of occurrences of situation (iii) are
the same. Therefore, the first equality in \eqref{equ:sdes-udes}
follows. The second equality in \eqref{equ:sdes-udes} follows from the
first one trivially.  
\end{proof}

\begin{proof}[Proof of Theorem \ref{thm:rev}]
	One verifies that
	\begin{eqnarray*}
		\des_{\s^*}(\r, 0) &=& \des_{\ts}(0, \r, 0) = \des_{\tu}(\rev(\Phi_{\ts}(0, \r, 0))) \\
		&=& \des_{\tu}(0, \rev(\Phi_{\s}(\r)), 0) = \des_{\u^*}(\rev(\Phi_\s(\r)), 0),
	\end{eqnarray*}
where the first and last equalities follow from the fact that
appending $0$'s at the beginning of a nonnegative-entry vector does
not create descents, the second equality follows from
\eqref{equ:sdes-udes}, and the third equality follows from the
definitions of $\Phi_\ts$ and $\Phi_\s.$  
\end{proof}

Finally, We prove Lemma \ref{lem:ehrh-same}.
\begin{proof}[Proof of Lemma \ref{lem:ehrh-same}]
Note that 
\begin{eqnarray*}& & 0 \le \frac{x_1}{s_1} \le \frac{x_2}{s_2} \le \cdots \le \frac{x_n}{s_n} \le 1 \\ 
	&\Longleftrightarrow& 0 \le \frac{s_n-x_n}{s_n} \le \frac{s_{n-1}-x_{n-1}}{s_{n-1}} \le \cdots \le \frac{s_1-x_1}{s_1} \le 1. 
\end{eqnarray*}
Hence, one see that the map $\x \mapsto \rev(\s-\x)$ gives a affine
transformation from $P_\s$ to $P_\u.$ Moreover, it is easy to see the
transformation is unimodular. The desired result follows. 
\end{proof}

\section{The case when $s_1=1$}\label{sec:s1=1} 



In this section, we focus on the special case when the first entry of
$\s$ is $1.$
\begin{lem}
Let $\r \in \Psi_n$. If $s_1=1,$ we have
\begin{equation}\label{equ:sdes-sasc}
	\des_{\s^*}(\r, 0) = \asc_{\s^*}(\Phi_\s(\r),0)= \asc_{\s}(\Phi_\s(\r)).
\end{equation}
\end{lem}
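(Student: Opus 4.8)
The plan is to reduce the statement to the already-proven identity \eqref{equ:sdes-udes} of the previous lemma (or, more precisely, to reprove its first equality in the present setting). Observe that when $s_1 = 1$ we have $\langle s_1 - 1\rangle = \langle 0\rangle$, so the first coordinate of any $\r \in \Psi_n$ is forced to be $0$; likewise the first coordinate of $\Phi_\s(\r)$ is $0$. Thus the vector $(\r,0) \in \Psi_n \times \langle 0\rangle$ already begins and ends with a $0$, exactly the shape of the vectors $\langle 0\rangle \times \Psi_n \times \langle 0\rangle$ appearing in \eqref{equ:sdes-udes} — here the leading $0$ is an honest coordinate of $\r$ rather than an appended one. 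This is the structural coincidence that makes the $s_1=1$ case behave like the two-sided-padded case.

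First I would record the two easy equalities. For the second equality in \eqref{equ:sdes-sasc}, note that appending a final $1$ to $\s$ to form $\s^*$ and a final $0$ to $\Phi_\s(\r)$ does not affect whether any index $i \le n-1$ is an $\s^*$-ascent, since those comparisons only involve $s_1,\dots,s_n$; and the index $n$ is an $\s^*$-ascent of $(\Phi_\s(\r),0)$ iff $\frac{z_n}{s_n} < \frac{0}{1} = 0$, which never happens. Hence $\asc_{\s^*}(\Phi_\s(\r),0) = \asc_\s(\Phi_\s(\r))$. For the first equality in \eqref{equ:sdes-sasc}, I would run the same four-case analysis (i)--(iv) used in the proof of \eqref{equ:sdes-udes}, applied to $\s^*$ and the vector $(\r,0)$: using the formula $z_i = s_i - r_i$ when $r_i \neq 0$ and $z_i = 0$ when $r_i = 0$ (valid for $1 \le i \le n$, and trivially for $i=0$ and $i=n+1$ where all entries are $0$), each index $i$ with $0 \le i \le n$ falls into one of the four cases, and cases (ii) and (iii) are the only ones where "$i$ is an $\s^*$-descent of $(\r,0)$" and "$i$ is an $\s^*$-ascent of $(\Phi_\s(\r),0)$" disagree.

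The one genuine point to check — and the main obstacle — is that cases (ii) and (iii) occur equally often, so that $\des_{\s^*}(\r,0)$ and $\asc_{\s^*}(\Phi_\s(\r),0)$ come out equal. In the previous lemma this balancing came from $r_0 = r_{n+1} = 0$ pinning down both ends. Here the relevant vector is $(r_1,\dots,r_n,0)$ with $r_1 = 0$ forced by $s_1 = 1$: the first coordinate $r_1 = 0$ and the appended last coordinate $0$ play the roles of $r_0$ and $r_{n+1}$. Concretely, scanning $i$ from $0$ (or $1$) to $n$, an occurrence of case (ii) is an index where the $r$-sequence drops from nonzero to zero and an occurrence of case (iii) is where it rises from zero to nonzero; since the sequence $(0, r_1, \dots, r_n, 0)$ — equivalently $(r_1,\dots,r_n,0)$ with $r_1=0$ — starts and ends at $0$, these transitions alternate and hence match up in number. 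This gives $\des_{\s^*}(\r,0) = \asc_{\s^*}(\Phi_\s(\r),0)$, and combining with the previous paragraph completes the proof. I expect the write-up to be short, essentially a remark that the argument of the preceding lemma applies verbatim once one notes $r_1 = 0$.
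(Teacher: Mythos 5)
Your proposal is correct and takes essentially the same route as the paper: the paper prepends a $0$ and invokes the identity \eqref{equ:sdes-udes} for $\ts$, then uses $s_1=1$ (which forces the first entry of $\Phi_\s(\r)$ to be $0$) to strip the padding, while you rerun the same four-case analysis directly on $(\r,0)$ with $r_1=0$ supplying the leading zero; in both versions the proof rests on the same balancing of the nonzero-to-zero and zero-to-nonzero transitions, plus the easy observation that the appended final $0$ over $s_{n+1}=1$ never creates an ascent.
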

\begin{proof}
Similarly to the proof of Theorem \ref{thm:rev}, we have by
\eqref{equ:sdes-udes} that
\begin{equation}\label{equ:scac}
	\des_{\s^*}(\r, 0) = \des_{\ts}(0, \r, 0) =
\asc_{\ts}(\Phi_{\ts}(0, \r, 0)) = \asc_{\ts}(0, \Phi_\s(\r), 0),
\end{equation}
where $\ts$ is defined in Notation \ref{notn:tstu}.

Suppose $s_n =1.$ Then the first two entries of the vector $(0,
\Phi_\s(\r),0)$ are $(0,0)$, which is not an ascent. Hence, 
\[ \asc_{\ts}(0, \Phi_\s(\r),0) = \asc_{\s^*}(\Phi_\s(\r),0)=
\asc_{\s}(\Phi_\s(\r)).\] 
Therefore equation \eqref{equ:sdes-sasc} follows.
\end{proof}

\begin{cor}\label{cor:s1=1}
Suppose $s_1 = 1.$ 
(Recall $\bREM_\s$ is defined in part b) of Definition
\ref{defn:REMs}.) Then
\begin{eqnarray*}
	\L^i(\Par_{\s^*}) &=&  \{ \x \in \Par_{\s^*} \cap \Z^{n+1} \ |
        \ \asc_{\s^*}(\bREM_{\s^*}(\x)) = i\} \\ 
	&=& \{ \bREM_{\s^*}^{-1}(\r) \ | \ \asc_{\s^*}(\r) = i, \ \r
        \in \Psi_n \times \langle 0 \rangle \} \\ 
	&=& \{ \bREM_{\s^*}^{-1}(\r,0) \ | \ \asc_{\s}(\r) = i, \ \r
        \in \Psi_n
        \}, 
\end{eqnarray*}
and
\begin{eqnarray*}
	\l^i(\Par_{\s^*}) &=& \# \{ \r \in \Psi_n\times \langle 0 \rangle
        \ | 
        \ \asc_{\s^*}(\r) = i\} \\ 
	&=& \# \{ \r \in \Psi_n \ | \ \asc_{\s}(\r) = i\}.  
\end{eqnarray*}
Hence, if let $\pi$ be the map that drops the last coordinate of a
vector, we have that $\pi \circ \bREM_{\s^*}$ gives a bijection
between $\Par_{\s*} \cap \Z^{n+1}$ and $\Psi_n$ such that 
\[ \x \in \L^i(\Par_{\s^*}) \ \Longleftrightarrow
\ \asc_\s(\pi(\bREM_{\s^*}(\x))) = i.\] 
\end{cor}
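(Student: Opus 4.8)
The plan is to assemble Corollary~\ref{cor:s1=1} from pieces that are already in place, the same way Corollary~\ref{cor:sn=1} and Corollary~\ref{cor:s*} were assembled from Theorem~\ref{thm:REMinv}. The starting point is Corollary~\ref{cor:s*}, which (applied to $\s^*$, whose last entry is $1$) states that $\l^i(\Par_{\s^*}) = \#\{\r \in \Psi_n \times \langle 0\rangle \mid \des_{\s^*}(\r) = i\}$, and more precisely that $\L^i(\Par_{\s^*})$ is the $\REM_{\s^*}$-preimage of the set of such $\r$. The only new input needed is the just-proved identity \eqref{equ:sdes-sasc}: for $\s$ with $s_1 = 1$, $\des_{\s^*}(\r,0) = \asc_{\s}(\Phi_\s(\r))$, together with the fact (from Lemma~\ref{lem:morebij} and its proof) that $\Phi_\s$ is a bijection of $\Psi_n$ and that $\bREM_\s = \Phi_\s \circ \REM_\s$, hence (appending the trivial last coordinate) $\bREM_{\s^*} = \Phi_{\s^*} \circ \REM_{\s^*}$.

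First I would rewrite the index set in Corollary~\ref{cor:s*}. Since $\Phi_{\s^*}: \Psi_n \times \langle 0\rangle \to \Psi_n \times \langle 0\rangle$ is a bijection, the substitution $\r' = \Phi_{\s^*}(\r)$ turns $\{\r \in \Psi_n \times \langle 0\rangle \mid \des_{\s^*}(\r) = i\}$ into $\{\r' \in \Psi_n \times \langle 0\rangle \mid \des_{\s^*}(\Phi_{\s^*}^{-1}(\r')) = i\}$; but $\Phi_{\s^*}$ is an involution on each coordinate (it sends $r_i$ to $s_i - r_i$ or $0$, and applying it twice returns $r_i$), so $\Phi_{\s^*}^{-1} = \Phi_{\s^*}$ and the condition is $\des_{\s^*}(\Phi_{\s^*}(\r')) = i$. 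Wait—more directly: \eqref{equ:sdes-sasc} reads $\des_{\s^*}(\r,0) = \asc_{\s^*}(\Phi_\s(\r),0)$, so the set $\{\r \in \Psi_n \times \langle 0\rangle \mid \des_{\s^*}(\r) = i\}$ equals $\{\Phi_{\s^*}^{-1}(\r'') \mid \asc_{\s^*}(\r'') = i,\ \r'' \in \Psi_n \times \langle 0\rangle\}$ after the change of variable $\r'' = (\Phi_\s(\r),0) = \Phi_{\s^*}(\r,0)$. Applying $\REM_{\s^*}^{-1}$ and using $\REM_{\s^*}^{-1} \circ \Phi_{\s^*}^{-1} = (\Phi_{\s^*} \circ \REM_{\s^*})^{-1} = \bREM_{\s^*}^{-1}$ then gives $\L^i(\Par_{\s^*}) = \{\bREM_{\s^*}^{-1}(\r'') \mid \asc_{\s^*}(\r'') = i,\ \r'' \in \Psi_n \times \langle 0\rangle\}$, which is the second displayed line of the corollary. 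The first displayed line, $\L^i(\Par_{\s^*}) = \{\x \in \Par_{\s^*}\cap\Z^{n+1} \mid \asc_{\s^*}(\bREM_{\s^*}(\x)) = i\}$, is just the same statement read through the bijection $\bREM_{\s^*}$. The third line follows from the second together with the second equality in \eqref{equ:sdes-sasc}, $\asc_{\s^*}(\Phi_\s(\r),0) = \asc_{\s}(\Phi_\s(\r))$, which says the trailing $0$ is irrelevant to the ascent count; correspondingly $\l^i(\Par_{\s^*})$ counts $\r \in \Psi_n \times \langle 0\rangle$ with $\asc_{\s^*}(\r) = i$, equivalently $\r \in \Psi_n$ with $\asc_\s(\r) = i$.

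For the final sentence about $\pi \circ \bREM_{\s^*}$: $\pi$ drops the last coordinate, which by Corollary~\ref{cor:s*}'s setup (lattice points of $\Par_{\s^*}$ have equal last two coordinates, hence $\bREM_{\s^*}$ lands in $\Psi_n \times \langle 0\rangle$) is a bijection $\Psi_n \times \langle 0\rangle \to \Psi_n$; composing, $\pi \circ \bREM_{\s^*}: \Par_{\s^*}\cap\Z^{n+1} \to \Psi_n$ is a bijection, and the equivalence $\x \in \L^i(\Par_{\s^*}) \Longleftrightarrow \asc_\s(\pi(\bREM_{\s^*}(\x))) = i$ is exactly the first displayed line combined with $\asc_{\s^*}(\r,0) = \asc_\s(\r)$.

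I do not anticipate a genuine obstacle here—the corollary is bookkeeping built on \eqref{equ:sdes-sasc}, the bijectivity of $\Phi_\s$, and the relation $\bREM_{\s^*} = \Phi_{\s^*}\circ\REM_{\s^*}$. The one point requiring a little care is the direction of the change of variables and making sure the $\Phi_\s$ appearing inside the $\asc$ in \eqref{equ:sdes-sasc} is correctly matched with $\bREM_{\s^*} = \Phi_{\s^*}\circ\REM_{\s^*}$ rather than its inverse; since $\Phi_\s$ acts coordinatewise as $r_i \mapsto s_i - r_i$ (or $0 \mapsto 0$) and is an involution, inverse and forward map coincide, so no sign or orientation issue actually arises. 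A secondary small point is checking that $\Phi_\s$ restricted appropriately still respects the last-coordinate-$0$ constraint when we pass between $\s$ and $\s^*$, i.e. that $\Phi_{\s^*}$ fixes the last coordinate $0$ and equals $\Phi_\s$ on the first $n$ coordinates — immediate from $s_{n+1} = 1$, since $\langle s_{n+1}-1\rangle = \langle 0\rangle$.
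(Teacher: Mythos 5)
Your proposal is correct and follows essentially the same route as the paper, whose proof simply cites equation \eqref{equ:sdes-sasc} together with Corollary \ref{cor:s*}; you have just made explicit the bookkeeping the paper leaves implicit, namely the change of variables via the involution $\Phi_{\s^*}$ and the identity $\bREM_{\s^*}=\Phi_{\s^*}\circ\REM_{\s^*}$. (One tiny remark: the fact that $\bREM_{\s^*}$ lands in $\Psi_n\times\langle 0\rangle$ is immediate from its definition, since the last coordinate is reduced modulo $s_{n+1}=1$, rather than from the equal-last-two-coordinates observation; this does not affect your argument.)
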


\begin{proof}
It follows from \eqref{equ:sdes-sasc} and Corollary \ref{cor:s*}.
\end{proof}
Therefore we can describe the $\delta$-vector of $\s$-lecture hall
polytope with $\s$-ascents when $s_1=1.$ 

\begin{cor}\label{delta-asc}
Suppose $s_1=1.$ Then the $\delta$-vector of $P_\s$ is given by
\begin{equation}\label{equ:delta-asc}
\delta_{P_\s,i} = \# \{ \r \in \Psi_n \ | \ \asc_{\s}(\r) = i\},\ 0 \le i
\le n. 
\end{equation}
\end{cor}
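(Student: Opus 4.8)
The plan is to derive Corollary~\ref{delta-asc} as an immediate consequence of the results already assembled in this section. The key observation is that the hypothesis $s_1 = 1$ forces the last entry of $\s^*$ to equal $1$ as well (indeed $s_{n+1} = 1$ by the definition of $\s^*$), so that $\Par_\s$ and $\Par_{\s^*}$ have the same grading by Lemma~\ref{lem:cnnt}; more precisely, by \eqref{equ:delta1} we have $\delta_{P_\s,i} = \l^i(\Par_\s) = \l^i(\Par_{\s^*})$ for $0 \le i \le n-1$, and $\delta_{P_\s,n} = 0$. Thus it suffices to compute $\l^i(\Par_{\s^*})$.

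First I would invoke Corollary~\ref{cor:s1=1}, whose last displayed equation states precisely that
\[
\l^i(\Par_{\s^*}) = \#\{\r \in \Psi_n \ | \ \asc_\s(\r) = i\}.
\]
Combining this with $\delta_{P_\s,i} = \l^i(\Par_{\s^*})$ gives \eqref{equ:delta-asc} for $0 \le i \le n-1$. For the boundary case $i = n$, one notes that since $s_1 = 1$, the first coordinate $r_1 \in \langle s_1 - 1\rangle = \langle 0 \rangle$ is forced to be $0$, so $\frac{r_1}{s_1} = 0 \le \frac{r_2}{s_2}$ and $i=1$ is never an $\s$-ascent; hence $\asc_\s(\r) \le n-2 < n$ for every $\r \in \Psi_n$, and the right-hand side of \eqref{equ:delta-asc} vanishes for $i = n$, matching $\delta_{P_\s,n} = 0$. (Alternatively, one simply observes that $\asc_\s(\r) \le n-1$ trivially since there are only $n-1$ possible ascent positions, and that the $i=n$ term of \eqref{equ:delta-des1} applied to $\s$ — or the analogous bookkeeping — already forces the count to be $0$; either way the two sides agree for all $0 \le i \le n$.)

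There is essentially no obstacle here: the corollary is a repackaging of Corollary~\ref{cor:s1=1} together with Lemma~\ref{lem:cnnt}. The only point requiring a sentence of care is the range of $i$ — making sure the statement ``$0 \le i \le n$'' is consistent with the fact that Corollary~\ref{cor:s1=1} directly addresses $\l^i(\Par_{\s^*})$ for all $i$, while Lemma~\ref{lem:cnnt} ties $\delta_{P_\s,i}$ to $\l^i(\Par_{\s^*})$ for $0 \le i \le n-1$ and separately gives $\delta_{P_\s,n} = 0$. Since $\l^n(\Par_{\s^*})$ need not a priori be zero, one should either note that in fact $\l^n(\Par_{\s^*}) = \l^n(\Par_\s) = 0$ by \eqref{equ:same} and \eqref{equ:delta1}, or restrict attention to $0 \le i \le n-1$ and handle $i = n$ by the ascent-count argument above. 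I would write the proof in two lines: ``This follows from Corollary~\ref{cor:s1=1} and equations~\eqref{equ:same} and~\eqref{equ:delta1} of Lemma~\ref{lem:cnnt}, noting that $\l^n(\Par_{\s^*}) = 0$.''

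\begin{proof}
Since $s_1 = 1$, the first coordinate of any $\r \in \Psi_n$ is $0$, and since $s_{n+1} = 1$ by definition of $\s^*$, the hypothesis $s_n = 1$ of Lemma~\ref{lem:cnnt} applies to $\s^*$; combining \eqref{equ:same} and \eqref{equ:delta1} (applied with $\s^*$ in place of $\s$, and then using $\delta_{P_\s,i} = \l^i(\Par_{\s^*})$ from \eqref{equ:delta}) we obtain $\delta_{P_\s,i} = \l^i(\Par_{\s^*})$ for $0 \le i \le n-1$, together with $\l^n(\Par_{\s^*}) = 0 = \delta_{P_\s,n}$. By the last display of Corollary~\ref{cor:s1=1},
\[
\l^i(\Par_{\s^*}) = \#\{\r \in \Psi_n \ | \ \asc_\s(\r) = i\}
\]
for every $i$. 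Since $r_1 = 0$ for all $\r \in \Psi_n$, the index $1$ is never an $\s$-ascent, so $\asc_\s(\r) \le n-2$ and the right-hand side also vanishes for $i = n$. Hence \eqref{equ:delta-asc} holds for all $0 \le i \le n$.
\end{proof}
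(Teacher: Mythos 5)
Your conclusion is right, and the core of your argument is the same as the paper's: equation~\eqref{equ:delta} gives $\delta_{P_\s,i}=\l^i(\Par_{\s^*})$ for \emph{all} $0\le i\le n$ with no hypothesis on $\s$, and the last display of Corollary~\ref{cor:s1=1} (which uses $s_1=1$) identifies $\l^i(\Par_{\s^*})$ with $\#\{\r\in\Psi_n \mid \asc_\s(\r)=i\}$ for every $i$; composing these two facts is the entire proof, including the case $i=n$, where both sides vanish because $\asc_\s(\r)\le n-1$ trivially. However, two of your intermediate claims are false and should be deleted rather than kept as alternative justifications. First, you invoke \eqref{equ:same}/\eqref{equ:delta1}, whose hypothesis is $s_n=1$ (the last entry of $\s$ itself), on the grounds that the last entry of $\s^*$ is $1$; that is a different hypothesis, and the asserted grading equality $\l^i(\Par_\s)=\l^i(\Par_{\s^*})$ genuinely fails under $s_1=1$ alone: for $\s=(1,3)$ the lattice points of $\Par_\s$ are $(0,0),(0,1),(0,2)$, giving grading $(1,1,1)$, while $\l^i(\Par_{\s^*})=\delta_{P_\s,i}=(1,2,0)$. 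Applying \eqref{equ:same}/\eqref{equ:delta1} ``with $\s^*$ in place of $\s$'' does not rescue this either: those statements then concern $\l^{n+1}$ and $\delta_{P_{\s^*}}$ in dimension $n+1$, and do not yield $\l^n(\Par_{\s^*})=0$.

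Second, your claim that ``$r_1=0$ forces index $1$ never to be an $\s$-ascent, so $\asc_\s(\r)\le n-2$'' has the inequality backwards: $r_1=0$ means index $1$ is never an $\s$-\emph{descent}, but it \emph{is} an $\s$-ascent whenever $r_2>0$ (again $\s=(1,3)$, $\r=(0,1)$ gives $\asc_\s(\r)=1=n-1$). Only the trivial bound $\asc_\s(\r)\le n-1$, which you mention parenthetically, is available — and it is all that is needed for $i=n$. None of these slips is fatal, because the valid chain \eqref{equ:delta} plus Corollary~\ref{cor:s1=1} already appears in your write-up and covers $0\le i\le n$ uniformly; but as written, the detour through Lemma~\ref{lem:cnnt}'s $s_n=1$ clauses and the ``never an ascent at index $1$'' argument are wrong steps and must be removed.
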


Corollary \ref{delta-asc} extends easily to arbitrary $s_1$ using equation~\eqref{equ:scac}. This result appears in \cite{SavSch2012} (a special case of their Theorem~5), but we have no need to state it here.

We find it is interesting to compare Corollary~\ref{cor:sn=1} and
Corollary~\ref{cor:s1=1}, and equations~\eqref{equ:delta-des1} and
\eqref{equ:delta-asc}. These are parallel results for the
cases $s_n=1$ and $s_1=1.$ One sees that the result of the case
$s_n=1$ is much easier to obtain than that of the case  $s_1=1.$ 
The above two corollaries also tell us that when $s_1=1,$ it is better
to use the map
$\bREM_{\s^*}$ than $\REM_{\s^*}$. 

Finally, applying the above results to $\s =(1,2,\dots,n),$ we obtain
a bijection from the lattice points in the fundamental parallelepiped
$\Par_{\s*}$ associated to $P_{(1,2,\dots,n)}$ to inversion
sequences. 

\begin{prop}\label{prop:1-n}
	Suppose $\s = (1, 2, \dots, n).$ Let $\pi$ be the map that
        drops the last coordinate of a vector. Then the composition
        map  
\begin{eqnarray*}
	\L^i(\Par_{\s^*})
        &\stackrel{\bREM_{\s^*}}{\xrightarrow{\hspace*{2cm}}}& \Psi_n
        \times \langle 0 \rangle \\ 
	&\stackrel{\pi}{\xrightarrow{\hspace*{2cm}}}& \Psi_n \\
	&\stackrel{\rev}{\xrightarrow{\hspace*{2cm}}}& \bar{\Psi}_n =
        \langle n-1 \rangle \times \cdots \times \langle 0 \rangle  
\end{eqnarray*}
gives a bijection from $\Par_{\s^*} \cap \Z^{n+1}$ to the inversion
sequences of length $n.$ Furthermore, for any $\x \in \Par_\s \cap
\Z^{n+1},$ we have 
	\begin{eqnarray}		
 \text{ the last coordinate of $\x$} &=&
 \asc(\pi(\bREM_{\s^*}(\x))) \label{equ:n-1_1}\\ 
		&=& \des(\rev(\pi(\bREM_{\s^*}(\x)))). \label{equ:n-1_2}
	\end{eqnarray}
\end{prop}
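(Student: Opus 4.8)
The plan is to assemble Proposition~\ref{prop:1-n} directly from the machinery already in place, since $\s=(1,2,\dots,n)$ satisfies both $s_1=1$ and, after forming $\s^*$, the relevant hypotheses of Sections~\ref{sec:reversal} and~\ref{sec:s1=1}. First I would invoke Corollary~\ref{cor:s1=1}: since $s_1=1$, the map $\pi \circ \bREM_{\s^*}$ is a bijection from $\Par_{\s^*}\cap\Z^{n+1}$ to $\Psi_n$, with $\x \in \L^i(\Par_{\s^*})$ if and only if $\asc_\s(\pi(\bREM_{\s^*}(\x))) = i$. Next, post-composing with $\rev$ (a bijection $\Psi_n \to \bar\Psi_n$) and noting that for $\s=(1,2,\dots,n)$ we have $\bar\Psi_n = \langle n-1\rangle \times \cdots \times \langle 0\rangle$, the stated three-step composition is a bijection onto inversion sequences of length $n$. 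This establishes the bijection claim and formula~\eqref{equ:n-1_1} (the last coordinate of $\x$ equals $\delta$-grading $i$, hence equals $\asc_\s(\pi(\bREM_{\s^*}(\x)))$; and since $s_i = i$, one checks $\asc_\s = \asc$ on $\Psi_n$ — actually one should be careful here, see below).

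The remaining task is formula~\eqref{equ:n-1_2}, i.e. that $\asc$ of a vector $\r$ equals $\des$ of $\rev(\r)$. This is the elementary observation that $i$ is an ascent of $\r=(r_1,\dots,r_n)$ exactly when the corresponding position $n-i$ is a descent of $\rev(\r)$: indeed $r_i < r_{i+1}$ reversed reads $(r_{i+1}$ in position $n-i)$ followed by $(r_i$ in position $n-i+1)$, which is a descent. So $\asc(\r) = \des(\rev(\r))$ for every $\r \in \N^n$, and combined with~\eqref{equ:n-1_1} this gives~\eqref{equ:n-1_2}. The one genuine point requiring care is the passage between $\s$-ascents (with $s_i = i$) and \emph{regular} ascents. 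For $\r \in \Psi_n = \langle 0\rangle \times \langle 1\rangle \times \cdots \times \langle n-1\rangle$, I must check that $\frac{r_i}{i} < \frac{r_{i+1}}{i+1}$ is equivalent to $r_i < r_{i+1}$ — but this is \emph{false} in general (e.g. $r_i = 1$, $i = 1$ vs.\ $r_{i+1} = 1$, $i+1 = 2$ gives $1 > 1/2$ yet $r_i = r_{i+1}$). So the honest route is: Corollary~\ref{delta-asc} already tells us $\delta_{P_\s,i} = \#\{\r \in \Psi_n \mid \asc_\s(\r) = i\}$, and after applying $\rev$ we land in $\bar\Psi_n$ where, by the reversal analysis of Section~\ref{sec:reversal} (specifically the interplay of $\Des_\s$ and $\Des_{\u}$), $\s$-ascents of $\r$ correspond to $\u$-descents of $\rev(\r)$ with $\u = (n,n-1,\dots,1)$; and then Lemma~\ref{lem:sdes-des} converts $\u$-descents into \emph{regular} descents on $\bar\Psi_n$. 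Thus the correct chain is $\asc_\s(\r) \to \des_\u(\rev(\r)) \to \des(\rev(\r))$, and it is $\des(\rev(\pi(\bREM_{\s^*}(\x))))$ that equals the last coordinate, giving~\eqref{equ:n-1_2}; formula~\eqref{equ:n-1_1} should then be read as asserting the $\s$-ascent count, consistent with Corollary~\ref{cor:s1=1}.

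Concretely, the steps in order are: (1) cite Corollary~\ref{cor:s1=1} to get the bijection $\pi \circ \bREM_{\s^*} : \Par_{\s^*}\cap\Z^{n+1} \to \Psi_n$ with the $\s$-ascent grading, yielding~\eqref{equ:n-1_1}; (2) post-compose with $\rev : \Psi_n \to \bar\Psi_n = \langle n-1\rangle\times\cdots\times\langle 0\rangle$, which is exactly the set of inversion sequences of length $n$ (via $I$ from \cite[Prop.~1.3.12]{stanleyec1}), establishing the bijection claim; (3) apply the reversal correspondence from Section~\ref{sec:reversal} together with Lemma~\ref{lem:sdes-des} (using that $\u=\rev(\s)=(n,n-1,\dots,1)$ has consecutive entries differing by $1$) to translate $\asc_\s$ on $\Psi_n$ into regular $\des$ on $\bar\Psi_n$; (4) conclude~\eqref{equ:n-1_2}. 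The main obstacle is precisely step~(3): one must be vigilant that $\s$-ascents are \emph{not} the same as regular ascents on $\Psi_n$ itself, and that the reduction to ordinary descents only becomes available \emph{after} reversing, because reversal converts the increasing sequence $\s$ into the sequence $\u$ of consecutive integers to which Lemma~\ref{lem:sdes-des} applies. Everything else is bookkeeping about which already-established bijection is being composed.
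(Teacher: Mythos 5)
Your route to \eqref{equ:n-1_2} is essentially sound, but your treatment of \eqref{equ:n-1_1} contains a genuine error: you declare the equivalence between $\s$-ascents and regular ascents on $\Psi_n$ to be false and therefore decline to prove \eqref{equ:n-1_1} as stated, proposing instead that it ``should be read'' as an $\s$-ascent count. Your counterexample ($i=1$, $r_i=1$, $r_{i+1}=1$) is not in the domain: for $\s=(1,2,\dots,n)$ we have $r_1\in\langle s_1-1\rangle=\langle 0\rangle$, so $r_1=0$ is forced. In fact the equivalence holds on all of $\Psi_n=\langle 0\rangle\times\langle 1\rangle\times\cdots\times\langle n-1\rangle$: apply Lemma \ref{lem:sdes-des} with $s=s_{i+1}=i+1$, $s'=s_i=i$, $r=r_{i+1}\in\langle i\rangle$, $r'=r_i\in\langle i-1\rangle$ to get $\frac{r_i}{i}<\frac{r_{i+1}}{i+1} \Longleftrightarrow r_i<r_{i+1}$, i.e.\ $\s$-ascents and regular ascents coincide for every $\r\in\Psi_n$. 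This is exactly the paper's proof: Corollary \ref{cor:s1=1} gives that the last coordinate of $\x$ equals $\asc_\s(\pi(\bREM_{\s^*}(\x)))$, Lemma \ref{lem:sdes-des} then upgrades $\asc_\s$ to $\asc$, yielding \eqref{equ:n-1_1}, and \eqref{equ:n-1_2} follows from the elementary identity $\asc(\r)=\des(\rev(\r))$.

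Note also that your proposal is internally inconsistent on this point: you correctly establish the chain $\asc_\s(\r)=\des_\u(\rev(\r))=\des(\rev(\r))$ on $\Psi_n$ (the last step by Lemma \ref{lem:sdes-des} applied to $\u=(n,n-1,\dots,1)$), and in your first paragraph you assert $\asc(\r)=\des(\rev(\r))$ for all $\r\in\N^n$; combining the two gives $\asc_\s=\asc$ on $\Psi_n$, which is precisely the statement you rejected, and with it \eqref{equ:n-1_1} as written. So your argument does deliver the bijection claim and \eqref{equ:n-1_2} (by a somewhat longer detour through $\u$-descents than the paper takes), but it leaves \eqref{equ:n-1_1}, which is correct as stated, unproved and indeed wrongly disputed, even though it follows from the same Lemma \ref{lem:sdes-des} you already invoke.
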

\begin{proof}
The only thing we need to verify is the equality
\eqref{equ:n-1_1}. (Note that the equality \eqref{equ:n-1_2} follows
from the equality \eqref{equ:n-1_1} easily.) By Corollary
\ref{cor:s1=1}, we have 
 \[\text{ the last coordinate of $\x$} =
 \asc_{\s}(\pi(\bREM_{\s^*}(\x))).\]  
However, since $\s=(1,2,\dots,n)$, we have by Lemma \ref{lem:sdes-des}
that for any $\r \in \langle 0 \rangle \times \langle 1 \rangle \times \cdots \times \langle n-1 \rangle $,
an $\s$-ascent of $\r$ is the same as a regular ascent of $\r$, and
vice versa. Hence equation~\eqref{equ:n-1_1} follows. 
\end{proof}

\end{document}